\newtheorem{theorem}{Theorem}
\newtheorem{lemma}[theorem]{Lemma}
\newtheorem{proposition}[theorem]{Proposition}
\newtheorem{corollary}[theorem]{Corollary}
\newtheorem*{claim}{Claim}
\theoremstyle{definition}
\newtheorem{definition}[theorem]{Definition}
\theoremstyle{remark}
\newtheorem{problem}{Problem}
\newcommand {\Hyp}{\mathbb{H}} 
\newcommand {\Parabolics}{\mathbb{P}}
\newcommand {\mc}{\mathcal} 
\DeclareMathOperator{\isom}{\mathsf{Isom}}
\DeclareMathOperator{\dist}{\mathsf{dist}}
\begin{document}

\title[Virtual Quasiconvex Amalgamation]{Virtual Amalgamation of Relatively Quasiconvex Subgroups}
\author[E.~Mart\'inez-Pedroza]{Eduardo Mart\'inez-Pedroza}
      \address{Memorial University\\
Saint John's, Newfoundland, Canada}
      \email{emartinezped@mun.ca}

\author[A.~Sisto]{Alessandro Sisto}
      \address{
                     Mathematical Institute \\
               Oxford OX1 3LB, United Kingdom }
      \email{sisto@maths.ox.ac.uk}
\subjclass[2000]{}
\keywords{Relatively hyperbolic groups, quasiconvex subgroups, combination theorem, amalgamation}
\date{\today}

\begin{abstract}
For relatively hyperbolic groups, we investigate conditions guaranteeing that the subgroup generated 
by two relatively quasiconvex subgroups $Q_1$ and $Q_2$ is relatively quasiconvex and isomorphic to $Q_1 \ast_{ Q_1 \cap Q_2} Q_2$. The main theorem extends results for quasiconvex subgroups of word-hyperbolic groups,  and results for discrete subgroups of isometries of hyperbolic spaces.
\end{abstract}
\maketitle

\section{Introduction}

This paper continues the work that started in~\cite{MP09} motivated by the following question: 
\begin{problem}
Suppose $G$ is a relatively hyperbolic group, $Q_1$ and $Q_2$ are relatively quasiconvex subgroups of $G$. 
Investigate conditions guaranteeing that the natural homomorphism
\begin{eqnarray*}  Q_1 \ast_{Q_1\cap Q_2} Q_2 \longrightarrow G  \end{eqnarray*}
is injective and that its image $\langle Q_1\cup Q_2\rangle$ is relatively quasiconvex. 
\end{problem}

Let $G$ be a group hyperbolic relative to a finite collection of subgroups $\Parabolics$, and let $\dist$ be a proper left invariant metric on $G$.
\begin{definition}
Two subgroups $Q$ and $R$ of $G$ have \emph{compatible parabolic subgroups} if for any maximal parabolic subgroup $P$ of $G$ either $Q\cap P < R\cap P$ or $R\cap P < Q\cap P$.
\end{definition}

\begin{theorem}\label{main}
For any pair of  relatively quasiconvex subgroups $Q$ and $R$ of $G$, there is a constant $M=M(Q, R, \dist)  \geq 0$
with the following property.  Suppose that $Q' < Q$ and $R' < R$ are subgroups such that
\begin{enumerate}
\item $Q' \cap R'$ has finite index in $Q \cap R$,
\item $Q'$ and $R'$ have compatible parabolic subgroups, and
\item $\dist( 1, g) \geq M$ for any $g$ in  $Q' \setminus Q'\cap R'$ or $R' \setminus Q'\cap R'$.
\end{enumerate}
Then the subgroup $\langle Q' \cup R' \rangle$ of $G$ satisfies:
\begin{enumerate}
\item The natural homomorphism \begin{eqnarray*}  Q' \ast_{Q'\cap R'} R' \longrightarrow \langle Q' \cup R' \rangle   \end{eqnarray*} is an isomorphism. 
\item If $Q'$ and $R'$ are relatively quasiconvex, then so is $\langle Q' \cup R' \rangle$.
\end{enumerate}
\end{theorem}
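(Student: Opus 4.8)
\medskip
\noindent\textbf{Outline of a proof.} The strategy is a ping--pong / combination argument carried out in a hyperbolic model for $(G,\Parabolics)$. Work in the cusped space $X=X(G,\Parabolics)$ of Groves--Manning --- a proper $\delta$--hyperbolic space on which $G$ acts properly, each peripheral coset $tP$ spanning a combinatorial horoball $\mc H_{tP}$ --- and fix a basepoint $x_0$; recall that a subgroup is relatively quasiconvex iff one (hence every) orbit is a quasiconvex subset of $X$. Put $A=Q'\cap R'$. Since an intersection of relatively quasiconvex subgroups is relatively quasiconvex, $Q\cap R$ is relatively quasiconvex, and hence by hypothesis (1) so is its finite--index subgroup $A$; in particular $(Q\cap R)\cdot x_0\subseteq N_{\rho}(A\cdot x_0)$ for some $\rho$. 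Given a reduced expression $w=g_1g_2\cdots g_n$ in $Q'\ast_{A}R'$ (the $g_i$ alternately in $Q'\setminus A$ and $R'\setminus A$), let $\sigma_w$ be the concatenation of the geodesics $[\,g_1\cdots g_{i-1}x_0,\ g_1\cdots g_i x_0\,]$. The plan is to show that, for $M$ large, $\sigma_w$ is a relative quasigeodesic without backtracking from $x_0$ to $w\cdot x_0$: it may be short in the metric of $X$, since a single syllable can be short in $X$ while long in $\dist$, but it does not re--enter a horoball it has left, and its endpoints differ. Conclusion (1) is then immediate (a reduced word $w\ne 1$ gives $w\cdot x_0\ne x_0$, so the natural surjection $Q'\ast_A R'\to\langle Q'\cup R'\rangle$ is injective), and conclusion (2) follows because, together with quasiconvexity of $Q'$, $R'$ and $A$, the family $\{\sigma_w\}$ shows that $X$--geodesics between points of $\langle Q'\cup R'\rangle\cdot x_0$ stay uniformly close to this orbit.

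\medskip
\noindent The engine is a coarse--intersection estimate: there is $C=C(Q,R)$ with
\[
N_{C}(Q\cdot x_0)\cap N_{C}(R\cdot x_0)\ \subseteq\ N_{C}\!\Big((Q\cap R)\cdot x_0\ \cup\ \bigcup\nolimits_{tP\in\mc F} tP\cdot x_0\Big),
\]
for a finite family $\mc F$ of peripheral cosets along which $Q$ and $R$ share an infinite parabolic part. Feeding this into the thin--triangles inequality yields the \emph{break--point estimate}: there are $D$ and $M=M(Q,R,\dist)$ such that for all $g\in Q'\setminus A$ and $h\in R'\setminus A$ the geodesic $[\,g^{-1}x_0,\ h x_0\,]$ either passes within $D$ of $x_0$, or else $[g^{-1}x_0,x_0]$ and $[x_0,hx_0]$ fellow--travel from $x_0$ along a long common excursion into a single horoball $\mc H_{tP}$, $tP\in\mc F$. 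Hypotheses (2) and (3) are calibrated precisely to defeat the second alternative: compatibility forces $Q'\cap P\le R'\cap P$ or $R'\cap P\le Q'\cap P$ for every peripheral $P$, and hypothesis (3) --- which forbids short elements of $Q'$ or $R'$ outside $A$ --- then prevents the pair $Q',R'$ from producing a syllable that runs deep into a shared horoball while also forcing enough divergence of $[g^{-1}x_0,x_0]$ and $[x_0,hx_0]$ away from horoballs to make the first alternative give genuine efficiency at the break point. Hypothesis (1) enters chiefly in conclusion (2), through the relative quasiconvexity of $A$ derived above, which makes the amalgamating subgroup contribute a quasiconvex piece to the glued model.

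\medskip
\noindent Assembling: one checks that distinct syllables of $\sigma_w$ cannot cause a common deep penetration of one horoball (again via (2) and (3)), so that $\sigma_w$ meets the hypotheses of a local--to--global principle for relative quasigeodesics without backtracking; hence $\sigma_w$ is a global such quasigeodesic, giving conclusion (1). For conclusion (2), assuming $Q'$ and $R'$ relatively quasiconvex, one builds a connected, fine, hyperbolic subgraph of the Bowditch fine hyperbolic graph of $(G,\Parabolics)$ by gluing copies of the $Q'$-- and $R'$--cocompact quasiconvex subgraphs along the Bass--Serre tree of $Q'\ast_A R'$ --- the break--point estimate is what keeps the glued graph fine and hyperbolic, and the gluing along $A$ is controlled by the quasiconvexity of $A$ --- and since $\langle Q'\cup R'\rangle$ then acts on it cofinitely on edges, the fine--graph criterion for relative quasiconvexity applies. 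Equivalently, one argues directly in $X$ that a geodesic between two orbit points of $\langle Q'\cup R'\rangle$ fellow--travels the corresponding $\sigma_w$, which stays near the orbit because $Q'$, $R'$ and $A$ are quasiconvex.

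\medskip
\noindent The principal obstacle is the break--point estimate, in two respects: controlling backtracking through horoballs (showing, uniformly, that a reduced expression cannot conceal cancellation inside a peripheral coset), and obtaining a threshold $M$ depending only on $Q$, $R$ and $\dist$. The latter is delicate because $Q'\le Q$ and $R'\le R$ need not be of finite index --- nor, for conclusion (1), even relatively quasiconvex --- so their intrinsic geometry is unavailable and everything must be controlled through $Q$, $R$ and the three hypotheses; in particular the step bounding fellow--travelling near $(Q\cap R)\cdot x_0$ must be organised, say by a minimality argument over elements of $(Q'\setminus A)\cup(R'\setminus A)$, so that the constants it introduces do not depend on $Q'$ or $R'$. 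This is where the theorem goes beyond earlier combination theorems, which either retained the ambient subgroups or passed only to finite--index ones; once the estimate is established, the ping--pong, the local--to--global step, and the gluing construction are routine in the relatively hyperbolic setting.
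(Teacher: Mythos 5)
Your skeleton --- concatenate geodesics between the partial products of a reduced word, prove an efficiency estimate at each break point, apply a local-to-global principle to get injectivity, then use fellow-travelling plus quasiconvexity of $Q'$ and $R'$ for conclusion (2) --- is the same as the paper's. But the proposal stops exactly where the proof begins: the ``break-point estimate'' is the entire mathematical content of the theorem, and you assert it rather than prove it, saying only that hypotheses (2) and (3) are ``calibrated precisely'' to defeat the bad alternative. The actual argument (Lemma~\ref{lem:claim}) runs as follows: choose the syllable decomposition $g=g_1\cdots g_n$ \emph{minimizing} $\sum_i\dist(1,g_i)$ over all alternating decompositions of $g$ (not a minimality over elements of $(Q'\setminus A)\cup(R'\setminus A)$, as you suggest at the end); take a center $p\in[1,g_i]$ of the triangle $(1,g_i,g_ig_{i+1})$. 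If $p$ lies in the thick part, quasiconvexity of $Q$ and $R$, the bounded-intersection Lemma~\ref{lem:bounded-intersection}, and hypothesis (1) produce $b\in Q'\cap R'$ with $g_ib$ near $p$, and rewriting $g=\cdots(g_ib)(b^{-1}g_{i+1})\cdots$ turns minimality into the bound on the Gromov product (Lemma~\ref{lem:computation}). If $p$ lies in a horoball $B$ with stabilizer $P$, one first shows $\dist(g_i,B)$ is uniformly bounded, then uses compatibility (hypothesis (2)) together with parabolic approximation (Lemma~\ref{lem:parabolic-approximation-2}) to produce $a\in Q'\cap g_i^{-1}Pg_i\subseteq Q'\cap R'$ near the entrance point, and again invokes minimality. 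None of this case analysis, nor the minimality mechanism that drives it, appears in your outline; without it there is no proof.

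A second, related inaccuracy: hypothesis (3) plays no role in the break-point estimate. It enters only afterwards, to guarantee that the broken path is an $M'$-local $(1,K)$-quasi-geodesic with $M'=\min_i\dist(1,g_i)$ exceeding the threshold of the local-to-global theorem for \emph{ordinary} quasi-geodesics (Lemma~\ref{concat}). Since $\dist$ may be taken to be the orbit metric on the hyperbolic space $X$ (any two proper left-invariant metrics are interchangeable for a condition of the form $\dist(1,g)\geq M$), a syllable long in $\dist$ \emph{is} long in $X$, so your worry about short syllables, and with it the machinery of relative quasi-geodesics without backtracking, horoball re-entry control, and the fine-graph/Bass--Serre gluing for conclusion (2), is unnecessary. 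Conclusion (2) follows directly from the Morse Lemma~\ref{morse}, Proposition~\ref{prop}, and quasiconvexity of $Q'$ and $R'$ applied segment by segment; relative quasiconvexity of $A=Q'\cap R'$ is not needed anywhere. In short: right architecture, but the load-bearing lemma is missing and the role of the hypotheses is misattributed.
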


Theorem~\ref{main} extends results by Gitik \cite[Theorem 1]{Gi99} for word-hyperbolic groups and by the the first author~\cite[Theorem 1.1]{MP09} for relatively hyperbolic groups, as well as the case of \cite[Theorem 5.3]{BC05} when $\Gamma$ is geometrically finite.

\begin{definition}
Two subgroups $Q$ and $R$ of a group $ G$ can be \emph{virtually  amalgamated} if there are finite index subgroups $Q' < Q$ and $R' < R$ such that the natural map $Q' \ast_{Q'\cap R'} R' \longrightarrow G$ is injective.  
\end{definition}

Let $Q$ and $R$ be relatively quasiconvex subgroups of $G$, and let $M$ be the constant provided by Theorem~\ref{main}. If either $G$ is residually finite or $Q\cap R$ is a separable subgroup of $G$, then there is a finite index subgroup $G'$ of $G$ such that $\dist (1, g) > M$ for every $g\in G$ with $g \not \in Q\cap R$. In the case that there is such subgroup $G'$,  then the subgroups $Q'=G'\cap Q$ and $R'=G'\cap R$ are relatively quasiconvex and satisfy the hypothesis of Theorem~\ref{main}; hence they have a quasiconvex virtual amalgam. 

\begin{corollary}[First Virtual Quasiconvex Amalgam Theorem]
Let $Q$ and $R$ quasiconvex subgroups of $G$ with compatible parabolic subgroups, and suppose that $Q\cap R$ is separable.  Then $Q$ and $R$ can be virtually amalgamated in $G$.
\end{corollary}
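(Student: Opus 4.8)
The plan is to apply Theorem~\ref{main} to a carefully chosen pair of finite index subgroups $Q' < Q$ and $R' < R$, with the separability of $Q\cap R$ supplying the finite index subgroup of $G$ needed to impose the distance condition (3).

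First I would fix the constant $M=M(Q,R,\dist)$ furnished by Theorem~\ref{main}. Because $\dist$ is a proper left invariant metric, the ball $\{g\in G:\dist(1,g)\le M\}$ is finite, so the set $F$ of its elements that do \emph{not} lie in $Q\cap R$ is finite. By separability of $Q\cap R$, for each $g\in F$ there is a finite index subgroup $G_g<G$ with $Q\cap R<G_g$ and $g\notin G_g$. Setting $G'=\bigcap_{g\in F}G_g$ gives a finite index subgroup of $G$ containing $Q\cap R$ and disjoint from $F$; equivalently, every $g\in G'$ with $g\notin Q\cap R$ satisfies $\dist(1,g)>M$. This is exactly the auxiliary subgroup described in the paragraph preceding the corollary, and producing it is the one place where the separability hypothesis is used.

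Next I would put $Q'=G'\cap Q$ and $R'=G'\cap R$, which have finite index in $Q$ and $R$, and are relatively quasiconvex as finite index subgroups of relatively quasiconvex subgroups. Then I would check the three hypotheses of Theorem~\ref{main}. Since $Q\cap R<G'$, we get $Q'\cap R'=G'\cap(Q\cap R)=Q\cap R$, so (1) holds with index $1$. For (2), intersecting the inclusion $Q\cap P<R\cap P$ (or its reverse) with $G'$ gives $Q'\cap P<R'\cap P$ (or its reverse) for every maximal parabolic $P$, so $Q'$ and $R'$ again have compatible parabolic subgroups. For (3), any $g\in Q'\setminus(Q'\cap R')$ lies in $G'$ but not in $Q\cap R=Q'\cap R'$, hence $\dist(1,g)>M$ by the previous step, and symmetrically for $R'$.

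Finally, Theorem~\ref{main} gives that $Q'\ast_{Q'\cap R'}R'\to\langle Q'\cup R'\rangle<G$ is an isomorphism (and $\langle Q'\cup R'\rangle$ is relatively quasiconvex), so in particular the natural map $Q'\ast_{Q'\cap R'}R'\to G$ is injective; by definition $Q$ and $R$ are virtually amalgamated in $G$. The argument is essentially a packaging of Theorem~\ref{main} with the observation already recorded before the corollary; the only real content is the finiteness-of-balls reduction that turns separability into a single finite index subgroup $G'$, after which verifying that hypotheses (1)--(3) are inherited by $Q'$ and $R'$ is routine.
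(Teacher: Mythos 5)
Your proof is correct and follows essentially the same route as the paper: the paragraph preceding the corollary is exactly this argument, producing the finite index subgroup $G'$ from separability (via properness of $\dist$ and the finiteness of the ball of radius $M$), setting $Q'=G'\cap Q$, $R'=G'\cap R$, and invoking Theorem~\ref{main}. You have merely written out the routine verifications of hypotheses (1)--(3) that the paper leaves implicit.
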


\begin{corollary}[Second Virtual Quasiconvex Amalgam Theorem]
Suppose that $G$ is residually finite.  Then any pair of relatively quasiconvex subgroups with compatible parabolic subgroups has a quasiconvex virtual amalgamam. 
\end{corollary}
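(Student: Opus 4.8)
The plan is to derive the theorem directly from Theorem~\ref{main}, with residual finiteness entering only to produce a sufficiently deep finite index subgroup of $G$; this is the route sketched in the paragraph preceding the corollary, and I would simply fill in the bookkeeping. Fix the constant $M = M(Q,R,\dist)$ supplied by Theorem~\ref{main}. Since $\dist$ is a proper left invariant metric, the ball $B = \{\, g \in G : \dist(1,g) \le M \,\}$ is finite. The first step is to use residual finiteness to choose, for each $g \in B \setminus \{1\}$, a finite index subgroup of $G$ not containing $g$, and to intersect these finitely many subgroups, obtaining a finite index subgroup $G' \le G$ with $G' \cap B = \{1\}$; equivalently, $\dist(1,g) > M$ for every $g \in G' \setminus \{1\}$.

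Next I would put $Q' = G' \cap Q$ and $R' = G' \cap R$ and verify the three hypotheses of Theorem~\ref{main}. These subgroups have finite index in $Q$ and $R$, and as finite index subgroups of relatively quasiconvex subgroups they are themselves relatively quasiconvex. For hypothesis (1), $Q' \cap R' = G' \cap (Q \cap R)$ has finite index in $Q \cap R$ because $G'$ has finite index in $G$. For hypothesis (2), given a maximal parabolic $P$, compatibility of $Q$ and $R$ gives, say, $Q \cap P \le R \cap P$; intersecting both sides with $G'$ preserves the inclusion, so $Q' \cap P \le R' \cap P$, and hence $Q'$ and $R'$ have compatible parabolic subgroups. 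For hypothesis (3), if $g \in Q' \setminus (Q' \cap R')$ then $g \in Q \cap G'$ but $g \notin R$ (otherwise $g \in G' \cap R = R'$), so $g \notin Q \cap R$; in particular $g \ne 1$, whence $g \notin B$ and $\dist(1,g) > M$. The symmetric argument handles $R' \setminus (Q' \cap R')$.

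With the hypotheses in place, Theorem~\ref{main} gives that $Q' \ast_{Q' \cap R'} R' \to \langle Q' \cup R' \rangle$ is an isomorphism and that $\langle Q' \cup R' \rangle$ is relatively quasiconvex; composing with the inclusion $\langle Q' \cup R' \rangle \hookrightarrow G$ shows that the natural map $Q' \ast_{Q' \cap R'} R' \to G$ is injective with relatively quasiconvex image, which is exactly what it means for $Q$ and $R$ to have a quasiconvex virtual amalgam. I do not expect a genuine obstacle here — all the real content is packaged in Theorem~\ref{main}. The only points that need a little care are the (standard) fact that a finite index subgroup of a relatively quasiconvex subgroup is relatively quasiconvex, and the bookkeeping in hypothesis (3) that passing to $G'$ pushes every element of $Q'$ or $R'$ outside $Q' \cap R'$ out of the finite ball $B$; both are immediate from left invariance of $\dist$ and the observation that intersection preserves inclusions.
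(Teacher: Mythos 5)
Your proposal is correct and follows essentially the same route as the paper: use residual finiteness to kill the finitely many short nontrivial elements in a finite index subgroup $G'$, set $Q'=G'\cap Q$, $R'=G'\cap R$, check the three hypotheses, and invoke Theorem~\ref{main}. The only (harmless) difference is that you arrange $\dist(1,g)>M$ for \emph{all} nontrivial $g\in G'$, whereas the paper only excludes short elements outside $Q\cap R$ (the weaker condition needed for the separability variant); both suffice for hypothesis (3).
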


An immediate corollary of the Virtual Quasiconvex Amalgam Theorem for residually finite relatively hyperbolic groups provides the following result by Baker-Cooper~\cite[Theorem 5.3]{BC05}.

\begin{corollary}[GF subgroups have virtual amalgams.] Suppose that $G$ is a geometrically finite subgroup of $\isom(\Hyp^n)$. If $Q$ and $R$ are subgroups of $G$ with compatible parabolic subgroups, then $Q$ and $R$ have a virtual amalgam. The resulting subgroup is geometrically finite if $Q$ and $R$ are geometrically finite.
\end{corollary}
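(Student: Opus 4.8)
The strategy is to recognize that $G$, equipped with its cusp structure, is a residually finite relatively hyperbolic group, and that inside such a $G$ the geometrically finite subgroups are exactly the relatively quasiconvex ones; the statement then follows from the Second Virtual Quasiconvex Amalgam Theorem together with the fact that geometric finiteness is inherited by finite index subgroups.

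First I would recall the classical facts that a geometrically finite $G < \isom(\Hyp^n)$ is finitely generated and is hyperbolic relative to the collection $\Parabolics$ of conjugacy representatives of its maximal parabolic subgroups (Bowditch, Farb; see also Hruska for the ambient relatively hyperbolic framework), and that $\isom(\Hyp^n)$ is (isomorphic to) the linear group $O^+(n,1)$, so $G$ is a finitely generated linear group and hence residually finite by Malcev's theorem. Thus the hypothesis of the Second Virtual Quasiconvex Amalgam Theorem is met. Next I would invoke Hruska's theorem (extending work of Bowditch) that a subgroup $H < G$ is relatively quasiconvex with respect to $\Parabolics$ if and only if $H$ acts geometrically finitely on $\Hyp^n$; under this correspondence the maximal parabolic subgroups of $H$ are precisely the infinite intersections $H \cap P$ with $P$ a maximal parabolic subgroup of $G$, which is exactly the data appearing in the definition of compatible parabolic subgroups. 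Consequently, with $Q$ and $R$ geometrically finite --- which is what makes them relatively quasiconvex, so this may as well be assumed --- and having compatible parabolic subgroups, the Second Virtual Quasiconvex Amalgam Theorem produces finite index subgroups $Q' < Q$ and $R' < R$ for which $Q' \ast_{Q' \cap R'} R' \to G$ is injective and $\langle Q' \cup R' \rangle$ is relatively quasiconvex; this gives the virtual amalgam. For the final clause, $Q'$ and $R'$ are geometrically finite as finite index subgroups of geometrically finite groups, and $\langle Q' \cup R' \rangle$ is geometrically finite because it is relatively quasiconvex, again by Hruska's equivalence.

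The only delicate point is the bookkeeping in the passage between the two languages: one must make sure that the relatively hyperbolic structure on $G$ fed into Theorem~\ref{main} and its corollaries is literally the peripheral structure coming from the cusps of the action on $\Hyp^n$, so that ``geometrically finite subgroup'' becomes ``relatively quasiconvex subgroup'' and ``compatible parabolic subgroups'' reads the same way in both settings. Once this dictionary is in place, nothing remains to prove beyond citing results already established.
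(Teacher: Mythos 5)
Your proposal is correct and follows exactly the route the paper intends: the paper offers no proof beyond declaring this an immediate corollary of the Second Virtual Quasiconvex Amalgam Theorem, and you supply precisely the standard ingredients (linearity of $G$ plus Malcev for residual finiteness, and Hruska's equivalence between geometric finiteness and relative quasiconvexity with respect to the cusp peripheral structure) needed to make that derivation explicit. Your remark that $Q$ and $R$ must be taken geometrically finite to apply the theorem is also the right reading of the statement.
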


\section{Preliminaries} \label{sec:preliminaries}

\subsection{Gromov-hyperbolic Spaces}

Let $(X, \dist)$ be a proper and geodesic $\delta$-hyperbolic space. Recall that a $(\lambda,\mu)-$quasi-geodesic is a curve $\gamma\colon [a, b] \to X$ parametrize by arc-length such that 
\[ |x-y|/\lambda -\mu  \leq \dist(\gamma(x), \gamma(y)) \leq \lambda|x-y|+\mu \]
for all $x,y\in [a,b]$. The curve $\gamma$ is a $k-$local $(\lambda,\mu)-$quasi-geodesic if the above condition is required only for $x,y\in [a,b]$ such that $|x-y|\leq k$.

\begin{lemma}\cite[Chapter 3, Theorem 1.2]{CDP-ggt}(Morse Lemma)
\label{morse}
 For each $\lambda,\mu,\delta$ there exists $k>0$ with the following property. In an $\delta-$hyperbolic geodesic space, any $(\lambda, \mu)-$quasi-geodesic at $k$-Hausdorff-distance from the geodesic between its endpoints.
\end{lemma}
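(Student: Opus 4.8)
Write $\gamma\colon[a,b]\to X$ for the $(\lambda,\mu)$-quasi-geodesic, put $p=\gamma(a)$, $q=\gamma(b)$, fix a geodesic $\sigma$ from $p$ to $q$, and let $N_r(\cdot)$ denote the closed $r$-neighbourhood; the goal is a constant $k=k(\lambda,\mu,\delta)$ with $\sigma\subseteq N_k(\operatorname{im}\gamma)$ and $\operatorname{im}\gamma\subseteq N_k(\sigma)$. The first step is to pass to a \emph{continuous} quasi-geodesic: replace $\gamma$ by the path $\gamma'$ that agrees with $\gamma$ at the integer points of $[a,b]$ and at $b$ and is geodesic on each intervening subinterval. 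A routine estimate shows $\gamma'$ is a continuous rectifiable $(\lambda',\mu')$-quasi-geodesic with $\lambda',\mu'$ depending only on $\lambda,\mu$, that $\length(\gamma'|_{[s,t]})\le(\lambda'+\mu')(|s-t|+1)$, and that $\operatorname{im}\gamma$ and $\operatorname{im}\gamma'$ lie within Hausdorff distance $\lambda+\mu$; so it suffices to bound the Hausdorff distance of $\sigma$ from a continuous rectifiable quasi-geodesic and then enlarge $k$ by $\lambda+\mu$. The other preliminary I would record is the standard length estimate: in a $\delta$-hyperbolic geodesic space one has $\dist(m,\operatorname{im}c)\le\delta\log_2(\length(c)+1)+1$ for every rectifiable path $c$ and every point $m$ on a geodesic joining the endpoints of $c$; this follows by bisecting $c$ at its arclength midpoint and descending through the $\delta$-thin geodesic triangles so produced, the logarithm counting the bisections.

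Next I would show $\sigma\subseteq N_{D_0}(\operatorname{im}\gamma')$ for a constant $D_0=D_0(\lambda',\mu',\delta)$. Set $D=\max_{x\in\sigma}\dist(x,\operatorname{im}\gamma')$, attained at some $x_0\in\sigma$ since $\sigma$ and $\operatorname{im}\gamma'$ are compact. Pick $y,z\in\sigma$ on the two sides of $x_0$ with $\dist(x_0,y)=\dist(x_0,z)=2D$, replacing $y$ by $p$ or $z$ by $q$ if that endpoint is closer; by the definition of $D$ there are parameters $s\le t$ with $\dist(\gamma'(s),y)\le D$ and $\dist(\gamma'(t),z)\le D$, and in an endpoint case one may simply take $\gamma'(s)=p$, resp.\ $\gamma'(t)=q$. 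Let $\alpha$ be the concatenation of the geodesic $[y,\gamma'(s)]$, the arc $\gamma'|_{[s,t]}$ and the geodesic $[\gamma'(t),z]$. On one hand every point of $\alpha$ is at distance at least $D$ from $x_0$: points on $\gamma'|_{[s,t]}$ by the choice of $D$, and points on the two bridging geodesics because these have length $\le D$ while their endpoints on $\sigma$ are at distance $2D$ from $x_0$ — in the endpoint cases instead because $p,q\in\operatorname{im}\gamma'$ forces $\dist(p,x_0),\dist(q,x_0)\ge D$. On the other hand $\dist(\gamma'(s),\gamma'(t))\le\dist(y,z)+2D\le 6D$, so $|s-t|\le\lambda'(6D+\mu')$ and hence $\length(\alpha)\le 2D+(\lambda'+\mu')(\lambda'(6D+\mu')+1)=:C_1D+C_2$ with $C_1,C_2$ depending only on $\lambda',\mu'$. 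Feeding $\alpha$ and $x_0$ (which lies on the sub-geodesic of $\sigma$ from $y$ to $z$) into the length estimate gives $D\le\dist(x_0,\operatorname{im}\alpha)\le\delta\log_2(C_1D+C_2+1)+1$, and since the right-hand side grows only logarithmically in $D$ this forces $D\le D_0$.

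It remains to bound $\operatorname{im}\gamma'$ inside a neighbourhood of $\sigma$, which I would do by a proportionality argument. Parametrize $\sigma$ by arclength and choose a subdivision $p=\sigma_0,\sigma_1,\dots,\sigma_n=q$ of $\sigma$ into pieces of length at most $1$; by the previous paragraph there are parameters $t_i\in[a,b]$ with $\dist(\gamma'(t_i),\sigma_i)\le D_0$, and we may take $t_0=a$, $t_n=b$. Then $\dist(\gamma'(t_i),\gamma'(t_{i+1}))\le 2D_0+1$, whence $|t_i-t_{i+1}|\le\lambda'(2D_0+1+\mu')=:J$ for every $i$. For any $r\in[a,b]$, let $i$ be the largest index with $t_i\le r$ (which exists since $t_0=a\le r$): if $i<n$ then $t_{i+1}>r$, so $0\le r-t_i<t_{i+1}-t_i\le J$, while if $i=n$ then $r=b=t_n$; in either case $\dist(\gamma'(r),\sigma)\le\dist(\gamma'(r),\gamma'(t_i))+D_0\le\lambda'J+\mu'+D_0=:R_0$. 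Thus $\operatorname{im}\gamma'\subseteq N_{R_0}(\sigma)$, and taking $k=R_0+D_0+\lambda+\mu$ and undoing the first reduction finishes the argument. I expect the conceptual crux to be the self-referential inequality $D\le\delta\log_2(C_1D+C_2+1)+1$ in the middle step; the fussiest point is the endpoint bookkeeping there — guaranteeing that the bridging path genuinely stays outside the ball of radius $D$ around $x_0$ — which the remarks above are meant to dispatch.
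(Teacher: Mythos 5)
The paper offers no proof of this lemma --- it is quoted directly from Coornaert--Delzant--Papadopoulos --- and your argument is precisely the standard proof found in that source (and in Bridson--Haefliger III.H.1.7): tame the quasi-geodesic, use the logarithmic estimate for the distance from a geodesic to a rectifiable path with the same endpoints, derive the self-bounding inequality $D\leq\delta\log_2(C_1D+C_2+1)+1$ at a point of maximal distance, and recover the reverse inclusion by a subdivision argument. It is correct as written (up to harmless slack in the exact constant of the logarithmic estimate), so there is nothing to add.
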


\begin{lemma}\cite[Chapter 3, Theorem 1.4]{CDP-ggt}
\label{concat}
 For each $\lambda,\mu,\delta$ there exist $k,\lambda',\mu'$ so that any $k-$local $(\lambda,\mu)-$quasi-geodesic in a $\delta-$hyperbolic geodesic space is a $(\lambda',\mu')-$quasi-geodesic.
\end{lemma}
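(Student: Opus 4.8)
The plan is to deduce the global quasi-geodesic inequalities from a \emph{stability} statement: once $k$ is large enough, the image of a $k$-local $(\lambda,\mu)$-quasi-geodesic $\gamma\colon[a,b]\to X$ stays in a bounded neighbourhood of the geodesic $\sigma=[\gamma(a),\gamma(b)]$, and conversely. Fix such a $\gamma$ and write $H=H(\lambda,\mu,\delta)$ for the Morse constant of Lemma~\ref{morse}. The \emph{local input}, used throughout, is that for $s\le t$ with $t-s\le k$ the restriction $\gamma|_{[s,t]}$ is a genuine $(\lambda,\mu)$-quasi-geodesic, hence at Hausdorff distance $\le H$ from $[\gamma(s),\gamma(t)]$; in particular $\dist(\gamma(s),\gamma(t))\le\lambda(t-s)+\mu$ on such intervals. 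This is the only place the quasi-geodesic hypothesis enters.

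First I would prove \textbf{neighbourhood containment}: there is $R=R(\lambda,\mu,\delta)$ with $\gamma([a,b])\subset N_R(\sigma)$, provided $k\ge k_0(\lambda,\mu,\delta)$. The mechanism is the exponential divergence of geodesics in a $\delta$-hyperbolic space, which is exactly what fails in Euclidean space and forces $k$ to be large. Concretely, I would iterate the local estimate: if $\gamma(c)$ lies at distance $\rho$ from $\sigma$, then applying Lemma~\ref{morse} to $\gamma|_{[c-k/2,c+k/2]}$ produces a genuine geodesic $[u,v]$ with $u=\gamma(c-k/2)$ and $v=\gamma(c+k/2)$ passing within $H$ of $\gamma(c)$; since a geodesic with endpoints in $N_\tau(\sigma)$ stays in $N_{\tau+2\delta}(\sigma)$ (quasiconvexity of neighbourhoods of $\sigma$, a thin-quadrilateral estimate), one of $u,v$ must lie at distance $\ge\rho-(H+2\delta)$ from $\sigma$, and projecting $[u,v]$ onto $\sigma$ pins down on which side. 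Thus an excursion to distance $\rho$ propagates outward in steps of $k/2$, losing at most $H+2\delta$ of height per step, while $\gamma$ returns to $\sigma$ at its endpoints. The reverse containment $\sigma\subset N_{R'}(\gamma([a,b]))$ then follows by a continuity argument along $\sigma$.

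With containment in hand I would extract the \textbf{quasi-geodesic inequalities}. The upper bound $\dist(\gamma(s),\gamma(t))\le\lambda'|s-t|+\mu'$ is routine: subdivide $[s,t]$ into at most $|s-t|/k+1$ intervals of length $\le k$ and sum the local Lipschitz estimate. For the lower bound, note that a subinterval restriction $\gamma|_{[s,t]}$ is again a $k$-local $(\lambda,\mu)$-quasi-geodesic, so containment gives $\gamma([s,t])\subset N_R([\gamma(s),\gamma(t)])$; since $\dist(\gamma(s),\gamma(t))$ equals the length of this geodesic, it suffices to bound $|s-t|$ linearly in that length. Sampling $[s,t]$ at spacing $k/2$ and letting $\pi$ be nearest-point projection to $[\gamma(s),\gamma(t)]$, the local lower bound gives $\dist(\gamma(u_i),\gamma(u_{i+2}))\ge k/\lambda-\mu$, so (as $\pi$ moves each point by at most $R$) the projected samples advance by at least $k/\lambda-\mu-2R>0$ every two steps. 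If these advances are coarsely monotone, summing over the $\approx 2|s-t|/k$ samples forces the geodesic to be long, i.e. $\dist(\gamma(s),\gamma(t))\gtrsim|s-t|$, and absorbing constants yields $\lambda',\mu'$.

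The main obstacle is the containment step: converting the propagation of excursions into an \emph{absolute}, length-independent bound on the distance from $\sigma$, and fixing the explicit threshold $k_0$ beyond which the propagation cannot simply oscillate back and forth. This is precisely where $\delta$-hyperbolicity is indispensable. A secondary delicate point is the coarse monotonicity of the projection $\pi\circ\gamma$ used in the lower bound, which must be established (for instance from the no-backtracking forced by the local quasi-geodesic property together with thin triangles) to prevent the forward advances from being cancelled by backward wandering.
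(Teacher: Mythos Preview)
The paper does not prove this lemma: it is quoted verbatim from \cite[Chapter~3, Theorem~1.4]{CDP-ggt} and used as a black box, so there is no in-paper argument to compare your proposal against.

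As for the sketch itself, the overall architecture (first prove Hausdorff closeness to the geodesic $\sigma$, then read off the two quasi-geodesic inequalities) is the standard one, and the lower-bound extraction via projected samples is fine once monotonicity is in hand. The containment step, however, is not yet a proof. Your propagation observation is correct --- if $\gamma(c)$ is at height $\rho$ then one of $\gamma(c\pm k/2)$ is at height $\ge\rho-(H+2\delta)$ --- but iterating it only tells you that \emph{some} neighbour stays high, with no control over which side; nothing prevents the high points from oscillating back and forth, and even with outward propagation you would get only $\rho\lesssim (b-a)/k$, which is not an absolute bound. What is missing is the genuinely hyperbolic ingredient you allude to but do not invoke: a path that stays outside $N_r(\sigma)$ and whose endpoints project to points of $\sigma$ at distance $D$ must have length at least of order $D\cdot 2^{r/\delta}$. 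Playing this exponential lower bound against the linear arc-length upper bound coming from the local Lipschitz estimate is what forces $r$ to be bounded independently of $b-a$. You correctly flag both this and the projection-monotonicity issue as the obstacles; they are real gaps, so what you have written is a sound plan rather than a proof.
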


 Fix a basepoint $x_0 \in X$. If $G$ is a subgroup of $Isom (X)$, we identify each element $g$ of $G$ with the point $gx_0$ of $X$.  For $g_1,g_2 \in G$ denote by $\dist (g_1, g_2)$ the distance $\dist (g_1 x_0, g_2 x_0)$. Observe that if $G$ is a discrete subgroup, this is a proper and left invariant pseudo-metric on $G$. 

\begin{lemma}[Bounded Intersection] \cite[Lemma 4.2]{MP09} \label{lem:bounded-intersection}
Let $G$ be a discrete subgroup of $\isom (X)$, let $Q$ and $R$ be subgroups of $G$, and let $\mu>0$ be a real number.  Then there is a constant $M=M(Q, R, \mu) \geq 0$ so that
\begin{equation*}  Q \cap \mc N_{\mu}(R)  \subset  \mc N_{M} ( Q \cap R). \end{equation*}
\end{lemma}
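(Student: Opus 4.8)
The plan is to run a pigeonhole argument over the finite ball $S := \{ g \in G : \dist(1, g) \le \mu \}$ of the proper left-invariant pseudo-metric on $G$. This set is finite precisely because $G$ is discrete and $X$ is proper (this is the properness of $\dist$ on $G$ recorded above). The observation that makes $S$ relevant is that if $q \in Q$ lies within $\mu$ of some $r \in R$, i.e.\ $\dist(q x_0, r x_0) \le \mu$, then by left-invariance $\dist(1, q^{-1} r) \le \mu$, so $q^{-1} r \in S$.

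Next I would fix, for each $s \in S$ that can be written as $s = q^{-1} r$ with $q \in Q$ and $r \in R$, one such representative pair $(q_s, r_s)$; there are only finitely many such $s$ since $S$ is finite, and it is here that all the dependence on $Q$, $R$, $\mu$ (and the basepoint) is absorbed. Now let $q \in Q \cap \mc N_\mu(R)$ be arbitrary, pick $r \in R$ with $\dist(q x_0, r x_0) \le \mu$, and set $s := q^{-1} r \in S$. Comparing with the representative, $q^{-1} r = q_s^{-1} r_s$, which rearranges to $q_s q^{-1} = r_s r^{-1} =: t$. Since $t = q_s q^{-1} \in Q$ and $t = r_s r^{-1} \in R$, we have $t \in Q \cap R$, and a one-line left-invariance computation gives $\dist(q x_0, t^{-1} x_0) = \dist(q x_0, q q_s^{-1} x_0) = \dist(1, q_s)$. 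Hence $q$ lies in the $\dist(1,q_s)$-neighbourhood of $Q \cap R$.

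Finally, setting $M := \max \{ \dist(1, q_s) \}$ over the finitely many relevant $s \in S$ gives a constant depending only on $Q$, $R$, and $\mu$, and the previous step shows $Q \cap \mc N_\mu(R) \subset \mc N_M(Q \cap R)$, as required. I do not expect a genuine obstacle in this argument; the only points needing a little care are that $\dist$ may be a pseudo-metric rather than a metric (harmless, since only left-invariance and symmetry are used) and the justification that $S$ is finite, which is exactly the properness of $\dist$ on $G$ guaranteed by discreteness of $G$ together with properness of $X$.
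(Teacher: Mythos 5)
Your argument is correct: the finiteness of the ball $S=\{g\in G:\dist(1,g)\le\mu\}$ follows from proper discontinuity of the discrete group $G$ on the proper space $X$, the rearrangement $q_sq^{-1}=r_sr^{-1}\in Q\cap R$ is valid, and the symmetry $\dist(1,q_s^{-1})=\dist(1,q_s)$ holds because $q_s$ acts by isometries, so taking $M=\max_s\dist(1,q_s)$ does the job. The paper itself gives no proof, deferring to \cite[Lemma 4.2]{MP09}, and your pigeonhole/coset-representative argument is essentially the standard proof given there, so there is nothing to add.
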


\subsection{Relatively Quasiconvex Subgroups}

We follow the approach to relatively hyperbolic groups as developed by Hruska~\cite{HK08}. 

\begin{definition}[Relative Hyperbolicity] \label{defn:rel-hyp-gp} A group $G$ is {\em relatively hyperbolic with respect to a finite collection of subgroups $\Parabolics$} if $G$ acts properly discontinuously and by isometries on a proper and geodesic $\delta$-hyperbolic space $X$ with the following property: $X$ has a $G$-equivariant collection of pairwise disjoint horoballs whose union is an open set $U$, $G$ acts cocompactly on $X-U$, and $\Parabolics$ is a set of representatives of the conjugacy classes of parabolic subgroups of $G$.
\end{definition}

Throughout the rest of the paper, $G$ is a relatively hyperbolic group acting on a proper and geodesic $\delta$-hyperbolic space $X$ with a $G$-equivariant collection of  horoballs satisfying all conditions of Definition~\ref{defn:rel-hyp-gp}. As before, we fix a basepoint $x_0 \in X-U$,  identify each element $g$ of $G$ with $gx_0\in X$ and let $\dist (g_1, g_2)$ denote $\dist (g_1 x_0, g_2 x_0)$ for $g_1,g_2 \in G$. 

\begin{lemma}\cite[Lemma 6.4]{Bow-99-rel-hyp}(Cocompact actions of parabolic subgroups on thick horospheres)\label{lem:parabolic-action}
Let $B$ be a horoball of $X$ with $G$-stabilizer $P$.  For any $M>0$,  $P$ acts cocompactly on $\mc N_M (B ) \cap (X-U)$.
\end{lemma}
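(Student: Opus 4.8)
The plan is to produce a compact set $D\subseteq X$ with $P\cdot D\supseteq \mc N_M(B)\cap(X-U)$, where $P=\mathrm{Stab}_G(B)$. Since $P$ acts properly discontinuously on $X$, and $\mc N_M(B)\cap(X-U)$ is a closed $P$-invariant subset (note $P$ fixes $B$ setwise, hence preserves $\mc N_M(B)$, and $P\le G$ preserves $U$ and therefore $X-U$), this is enough: the quotient $\bigl(\mc N_M(B)\cap(X-U)\bigr)/P$ is then a continuous image of the compact set $D\cap\bigl(\mc N_M(B)\cap(X-U)\bigr)$.

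First I would invoke cocompactness of $G$ on $X-U$ to fix a compact $K\subseteq X-U$ with $G\cdot K=X-U$. Given $y\in\mc N_M(B)\cap(X-U)$, pick $g\in G$ with $g^{-1}y\in K$; then $g^{-1}y\in\mc N_M(g^{-1}B)\cap K$, so the horoball $g^{-1}B$ — which lies in the $G$-orbit of $B$ — comes within distance $M$ of $K$, i.e.\ $\overline{g^{-1}B}\cap\mc N_M(K)\neq\emptyset$.

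The key input is then the \emph{local finiteness} of the $G$-equivariant family of horoballs: only finitely many horoballs meet the compact set $\mc N_M(K)$. This is a standard structural feature of this formulation of relative hyperbolicity and can be deduced from a short argument using the proper discontinuity of $G$ on $X$ together with the cocompactness of $G$ on $X-U$; I expect it to be the only real obstacle. Granting it, list the finitely many horoballs in the $G$-orbit of $B$ that come within $M$ of $K$ as $g_1^{-1}B,\dots,g_n^{-1}B$. For $y$ as above, $g^{-1}B=g_i^{-1}B$ for some $i$, hence $g_ig^{-1}\in\mathrm{Stab}_G(B)=P$; with $p:=g_ig^{-1}$ we get $g=p^{-1}g_i$ and therefore
\[ y=g(g^{-1}y)\in p^{-1}g_iK\subseteq P\cdot\Bigl(\,\bigcup_{i=1}^n g_iK\Bigr). \]
So $\mc N_M(B)\cap(X-U)\subseteq P\cdot D_0$ for the compact set $D_0=\bigcup_{i=1}^n g_iK$, and $D:=D_0\cap\bigl(\mc N_M(B)\cap(X-U)\bigr)$ is the desired compact set: it is compact since $\mc N_M(B)\cap(X-U)$ is closed, and $P\cdot D$ exhausts $\mc N_M(B)\cap(X-U)$ by $P$-invariance of the latter.

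A minor repackaging that isolates the geometry: a geodesic from a point of $X-U$ to $\overline B$ must cross the horosphere $\partial B$, so $\mc N_M(B)\cap(X-U)\subseteq\mc N_M(\partial B)$; hence it suffices to treat the case $M=0$, i.e.\ to show $P$ acts cocompactly on $\partial B$, and then pass back up, since an $M$-neighborhood of a $P$-cocompact set is again $P$-cocompact (and a closed invariant subset of a properly discontinuous cocompact action is cocompact). The case $M=0$ is proved by exactly the argument above with $K\subseteq X-U$, and again the sole nontrivial ingredient is local finiteness of the horoball family.
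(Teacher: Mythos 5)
The paper offers no argument for this lemma at all: it is imported verbatim from Bowditch, who establishes it inside his own analysis of geometrically finite actions (via the structure of the invariant horoball system and bounded parabolic points). So your proof is necessarily a different route, and its skeleton is sound: push a point $y\in\mc N_M(B)\cap(X-U)$ into a compact fundamental set $K$ for $G$ on $X-U$, note that the corresponding translate $g^{-1}B$ is one of finitely many horoballs of the family meeting the compact set $\overline{\mc N_M(K)}$, and convert the finite ambiguity into an element of $P=\mathrm{Stab}_G(B)$; this yields a compact $D_0$ with $P\cdot D_0\supseteq \mc N_M(B)\cap(X-U)$, which is exactly cocompactness. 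Two cosmetic remarks: take the closed neighbourhood $\mc N_M(B)$ (or intersect $D_0$ with the closure of $\mc N_M(B)\cap(X-U)$) so that your $D$ is genuinely compact, and note that proper discontinuity of $P$ plays no role here --- cocompactness is purely the covering statement. The merit of your argument is that it stays entirely within the axioms of Definition~\ref{defn:rel-hyp-gp}, with no appeal to boundary dynamics.

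The one real issue is the step you defer, local finiteness of the horoball family, and while your instinct that it is the crux is correct, the ingredients you propose for it (proper discontinuity of $G$ on $X$ plus cocompactness on $X-U$) are not the ones that make it work; what does the job is pairwise disjointness of the horoballs, properness of $X$, and the metric ``thickness'' of horoballs just inside their frontiers. Sketch: if infinitely many distinct horoballs $B_i$ met a compact set $C$, pick $x_i\in B_i\cap C$ accumulating at $x\in C$; since the $B_i$ are pairwise disjoint, at most one contains $x$, so all but one $B_i$ has a frontier point $y_i$ on $[x_i,x]$, hence in a fixed compact set. A horoball is (coarsely) a sublevel set of a Busemann function, which is $1$-Lipschitz and decreases at unit rate, up to an error $C(\delta)$, along geodesic rays to the centre; hence the point $z_i$ at distance $t:=C(\delta)+1$ from $y_i$ along the ray from $y_i$ to the centre of $B_i$ satisfies $B(z_i,1/2)\subseteq B_i$. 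These balls lie in pairwise disjoint horoballs, so they are pairwise disjoint, yet their centres lie in a fixed compact set; by properness only finitely many points of that set can be pairwise $1$-separated, a contradiction. With this inserted (and it is the same fact that shows the horoballs fall into finitely many $G$-orbits), your proof is complete and correct.
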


\begin{lemma}[Parabolic Approximation]\label{lem:parabolic-approximation-2}
Let $Q$ be a subgroup of $G$ and let $\mu>0$ be a real number.  There is a constant $M=M(Q, \mu)$ with the following property. If $P$ is a maximal parabolic subgroup of $G$ stabilizing a horoball $B$,   and $\{1, q\} \subset Q \cap \mc N_\mu (B)$ then there is $p \in Q\cap P$ such that $\dist (p, q)<M$.
\end{lemma}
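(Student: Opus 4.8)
The plan is to chain together three facts: the horoballs fall into finitely many $G$-orbits (corresponding to the finite set $\Parabolics$); a maximal parabolic acts cocompactly on each thick horosphere around the horoball it stabilizes (Lemma~\ref{lem:parabolic-action}); and the Bounded Intersection property (Lemma~\ref{lem:bounded-intersection}). The crucial point is that the hypothesis $1\in\mc N_\mu(B)$, i.e.\ $x_0\in\mc N_\mu(B)$, restricts $B$ to finitely many horoballs, and it is this that makes $M$ depend only on $Q$ and $\mu$.

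First I would observe that $G$ permutes the horoballs, hence preserves $X-U$; since $x_0\in X-U$, every translate $gx_0$ lies in $X-U$, in particular $qx_0\in X-U$. Next I would show that $\mc B_\mu=\{B : x_0\in\mc N_\mu(B)\}$ is finite. Since there are finitely many $G$-orbits of horoballs, it is enough to bound, for a fixed orbit representative $B_0$ with stabilizer $P_0$, the number of its translates in $\mc B_\mu$. Writing $\mc N_\mu(B_0)\cap(X-U)=P_0K$ with $K$ compact (Lemma~\ref{lem:parabolic-action}), a translate $gB_0$ belongs to $\mc B_\mu$ exactly when $g^{-1}x_0\in\mc N_\mu(B_0)\cap(X-U)=P_0K$; writing $g^{-1}x_0=pk$ with $p\in P_0$ and $k\in K$ gives $(gp)^{-1}x_0=k\in K$, so $gp$ lies in the set $\{h\in G: h^{-1}x_0\in K\}$, which is finite by proper discontinuity of the action together with compactness of $K$. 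Hence $gP_0$, and so the horoball $gB_0$, ranges over a finite set. Enumerate $\mc B_\mu=\{B_1,\dots,B_N\}$ with stabilizers $P_1,\dots,P_N$.

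For each $i$, Lemma~\ref{lem:parabolic-action} applied with the constant $\mu$ produces a compact $K_i$ with $P_iK_i=\mc N_\mu(B_i)\cap(X-U)$; put $C_i=\sup_{k\in K_i}\dist(k,x_0)<\infty$. If $y=pk\in\mc N_\mu(B_i)\cap(X-U)$ with $p\in P_i$, $k\in K_i$, then left invariance gives $\dist(y,px_0)=\dist(k,x_0)\le C_i$, so $\mc N_\mu(B_i)\cap(X-U)\subseteq\mc N_{C_i}(P_i)$. Applying the Bounded Intersection Lemma to $Q$, $P_i$ and the constant $C_i$ yields $M_i=M(Q,P_i,C_i)$ with $Q\cap\mc N_{C_i}(P_i)\subseteq\mc N_{M_i}(Q\cap P_i)$. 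I would then set $M=1+\max_{1\le i\le N}M_i$, which depends only on $Q$ and $\mu$.

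To conclude, take $P$, $B$, $q$ as in the statement; since $1\in Q\cap\mc N_\mu(B)$ we have $x_0\in\mc N_\mu(B)$, so $B=B_i$ for some $i$, and then $P=P_i$ because a maximal parabolic stabilizing $B$ is the full stabilizer of $B$. Since $qx_0\in\mc N_\mu(B_i)\cap(X-U)$ by the first step, the chain of inclusions above gives $q\in Q\cap\mc N_{C_i}(P_i)\subseteq\mc N_{M_i}(Q\cap P_i)$, i.e.\ there is $p\in Q\cap P$ with $\dist(p,q)\le M_i<M$. The step I expect to be the main obstacle is establishing the finiteness of $\mc B_\mu$ --- equivalently, the uniformity of $M$ over all maximal parabolics $P$ --- which is exactly where the hypothesis $1\in\mc N_\mu(B)$, Lemma~\ref{lem:parabolic-action}, and proper discontinuity of the $G$-action all get used; the remaining steps are a routine concatenation of the quoted lemmas.
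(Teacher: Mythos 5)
Your proof is correct and follows essentially the same route as the paper's: apply the cocompactness of the parabolic action on thick horospheres (Lemma~\ref{lem:parabolic-action}) to get $q$ close to $P$, then Bounded Intersection (Lemma~\ref{lem:bounded-intersection}) to get $q$ close to $Q\cap P$, and finally use that the hypothesis $1\in\mc N_\mu(B)$ confines $B$ to finitely many horoballs so the constant can be taken uniform. The only difference is that you spell out the finiteness of $\mc B_\mu$ via proper discontinuity, a point the paper simply asserts.
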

\begin{proof}
By Lemma~\ref{lem:parabolic-action},  $\dist (q, P) <M_1$ for some constant $M_1=M_1(Q, P)$. Then Lemma~\ref{lem:bounded-intersection} implies that $\dist (q, Q\cap P) <M_2$ where $M_2=N(Q, P, M_1)$. Since $B$ is a horoball at distance less than $\mu$ from $1$, there are only finitely many possibilities for $B$ and hence for the subgroup $P$. Let $M$ the maximum of all $N(Q, P, \mu)$ among the possible $P$.
\end{proof}

\begin{definition}[Relatively Quasiconvex Subgroup] A subgroup $Q$ of $G$ is {\em relatively quasiconvex} if there is $\mu \geq 0$ such that for any geodesic $c$ in $X$ with endpoints in $Q$, $c \cap (X - U) \subset N_\mu (Q).$
\end{definition}

The choice of horoballs turns out not to make a difference:      

\begin{proposition}\cite{HK08}\label{prop}
If $Q$ is relatively quasiconvex in $G$ then for any $L\geq 0$ there is $\mu \geq 0$ such that for any geodesic $c$ in $X$ with endpoints in $Q$, $c \cap \mc N_L(X - U) \subset N_\mu (Q).$
\end{proposition}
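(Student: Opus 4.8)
The plan is to reduce the statement to a purely local fact about geodesics penetrating a single horoball. Fix $L\ge 0$. By relative quasiconvexity there is $\mu_0\ge 0$ with $c\cap(X-U)\subset \mc N_{\mu_0}(Q)$ for every geodesic $c$ with endpoints in $Q$, and I would produce the required $\mu$ as a function of $\mu_0$, $L$ and $\delta$. Let $c$ be such a geodesic and let $x\in c$ with $\dist(x,X-U)\le L$. If $x\in X-U$ there is nothing to do, so assume $x$ lies in a (unique) horoball $B$ of the given collection, with bounding horosphere $S=\partial B$. Since the endpoints of $c$ lie in $Q x_0\subset G x_0\subset X-U$ (the collection of horoballs being $G$-equivariant, hence $G$ preserves $X-U$), the component of $c\cap B$ through $x$ is a geodesic subsegment $\gamma=[a,b]$ of $c$ with $a,b\in S\subset X-U$; quasiconvexity then gives $\dist(a,Q),\dist(b,Q)\le \mu_0$. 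Moreover any path from $x\in B$ to $X-U$ must cross $S$, so $\dist(x,S)\le \dist(x,X-U)\le L$.

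The key step will be the following geometric claim, uniform over all horoballs: there is $D=D(L,\delta)$ such that if $\gamma$ is a geodesic contained in a closed horoball $\overline B$ with both endpoints on $S=\partial B$, then every $z\in\gamma$ with $\dist(z,S)\le L$ lies within $D$ of an endpoint of $\gamma$. Granting this and applying it with $z=x$ yields $\dist(x,\{a,b\})\le D$, hence $\dist(x,Q)\le D+\mu_0$; thus $\mu:=D+\mu_0$ works in all cases, proving the proposition.

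To prove the claim I would use the standard description of $B$ as a coarse superlevel set $\{\beta>0\}$ of a Busemann function $\beta$ centred at the parabolic point $\xi$ of $B$, with $\beta|_S\approx 0$, with $\dist(z,S)=\beta(z)+O(\delta)$ for $z\in B$, with $\beta$ coarsely $1$-Lipschitz, and with $\beta$ growing at unit rate along geodesic rays pointing at $\xi$. Consider the ideal triangle with vertices $\gamma^-,\gamma^+,\xi$: by thinness of ideal triangles, its side $[\gamma^-,\gamma^+]=\gamma$ lies in the $\delta'$-neighbourhood of $[\gamma^-,\xi)\cup[\gamma^+,\xi)$ for some $\delta'=\delta'(\delta)$. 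A point $z\in\gamma$ with $\dist(z,S)\le L$ has $\beta(z)=O(L+\delta)$; it is $\delta'$-close to a point $z'$ on one of these two rays, say $[\gamma^-,\xi)$, so $\beta(z')=O(L+\delta+\delta')$, and since $\beta$ increases at unit rate along that ray from $\beta(\gamma^-)\approx 0$, the point $z'$, hence $z$, lies within $O(L+\delta+\delta')$ of $\gamma^-$. This gives the claim with $D=D(L,\delta)$.

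The main obstacle lies in this last step: one must check that the horoballs furnished by Definition~\ref{defn:rel-hyp-gp} genuinely behave like metric horoballs up to bounded error, so that the Busemann picture and the uniform estimate $\dist(\cdot,S)=\beta(\cdot)+O(\delta)$ are legitimate, and one must carry the additive constants through carefully and uniformly in the horoball. These facts are standard in this framework and can be extracted from \cite{Bow-99-rel-hyp} and \cite{HK08} — the latter being, in any case, where the proposition itself is taken from.
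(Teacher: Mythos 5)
Your argument is correct in outline, and it supplies an actual proof for a statement the paper does not prove but simply imports by citation from \cite{HK08}. The reduction is the right one: a point $x$ of $c\cap\mc N_L(X-U)$ lying in a horoball $B$ sits on the component $(a,b)$ of $c\cap B$ through $x$, whose endpoints lie on $\partial B\subset X-U$ (note the endpoints of $c$ lie in $Gx_0\subset X-U$ by equivariance, so this component cannot reach them), hence are $\mu_0$-close to $Q$; everything then rests on the uniform local claim that a point at depth at most $L$ on a geodesic contained in $\overline B$ with endpoints on $\partial B$ is within $D(L,\delta)$ of an endpoint. Your proof of that claim, comparing $[a,b]$ with the ideal triangle over the parabolic point and tracking a coarse Busemann function along the two rays, is the standard and correct one, and the constant is uniform over horoballs because $\Parabolics$ is finite and the collection is $G$-equivariant. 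The only load-bearing input not contained in the paper's Definition~\ref{defn:rel-hyp-gp} --- which never actually says what a ``horoball'' is --- is the coarse Busemann structure: that $\dist(\cdot,\partial B)$ agrees with a horofunction up to a uniform additive constant and that the horofunction grows at unit rate along rays to the parabolic point. You flag this yourself and it is exactly what Bowditch's framework provides, so I count it as an acknowledged reliance on the cited setup rather than a gap. Compared with the paper, which offers only the citation, your route has the merit of making the dependence $\mu=\mu_0+D(L,\delta)$ explicit.
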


\section{Proof of the main theorem}
For the convenience of the reader, we report below the statement of Theorem \ref{main}.

\begin{theorem}
For any pair of  relatively quasiconvex subgroups $Q$ and $R$ of $G$, there is a constant $M=M(Q, R, \dist)  \geq 0$
with the following property.  Suppose that $Q' < Q$ and $R' < R$ are subgroups such that
\begin{enumerate}
\item $Q' \cap R'$ has finite index in $Q \cap R$,
\item $Q'$ and $R'$ have compatible parabolic subgroups, and
\item $\dist( 1, g) \geq M$ for any $g$ in  $Q' \setminus Q'\cap R'$ or $R' \setminus Q'\cap R'$.
\end{enumerate}
Then the subgroup $\langle Q' \cup R' \rangle$ of $G$ satisfies:
\begin{enumerate}
\item The natural homomorphism \begin{eqnarray*}  Q' \ast_{Q'\cap R'} R' \longrightarrow \langle Q' \cup R' \rangle   \end{eqnarray*} is an isomorphism. 
\item If $Q'$ and $R'$ are relatively quasiconvex, then so is $\langle Q' \cup R' \rangle$.
\end{enumerate}
\end{theorem}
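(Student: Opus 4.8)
The plan is to prove both conclusions simultaneously by a ``ping-pong on quasi-geodesics'' argument in the $\delta$-hyperbolic space $X$, following the strategy pioneered by Gitik and by the first author in \cite{MP09}. The guiding principle is: a reduced word $w = g_1 g_2 \cdots g_n$ in the amalgamated free product $Q' \ast_{Q'\cap R'} R'$ (so the $g_i$ alternate between $Q' \setminus Q'\cap R'$ and $R' \setminus Q'\cap R'$) should trace out a quasi-geodesic in $X$ whose endpoints are $x_0$ and $w\,x_0$; once this is established, $w\,x_0 \neq x_0$ whenever $n \geq 1$, which gives injectivity of the natural homomorphism, and the fact that these quasi-geodesics fellow-travel geodesics (by the Morse Lemma, Lemma~\ref{morse}) together with the relative quasiconvexity of $Q'$ and $R'$ will give relative quasiconvexity of $\langle Q' \cup R'\rangle$.

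Here are the steps in order. First, fix $\mu$ a relative quasiconvexity constant that works simultaneously for $Q$ and $R$, and use Proposition~\ref{prop} to upgrade it so that geodesics with endpoints in $Q$ (resp.\ $R$) stay $\mu$-close to $Q$ (resp.\ $R$) even within a large neighbourhood $\mc N_L(X-U)$ of the thick part; the value of $L$ needed will be dictated by later steps. Second, apply the Bounded Intersection Lemma (Lemma~\ref{lem:bounded-intersection}) to $Q$ and $R$: this gives a constant $M_0$ with $Q \cap \mc N_{\mu}(R) \subset \mc N_{M_0}(Q\cap R)$, which is the engine that forces any long backtracking between a $Q$-piece and an $R$-piece to be absorbed into $Q\cap R$, hence — using hypothesis (1), finite index of $Q'\cap R'$ in $Q\cap R$, to pass to the primed groups — into $Q'\cap R'$; this is exactly what a reduced word forbids. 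Third, set the constant $M$ of the theorem: it should be chosen larger than all the ``backtracking'' bounds produced above (with the finite-index correction), and also large enough to invoke the local-to-global principle for quasi-geodesics, Lemma~\ref{concat}. Concretely, one shows that the concatenated path $[x_0, g_1 x_0] \cup [g_1 x_0, g_1 g_2 x_0] \cup \cdots$, where each bracket is a geodesic in $X$, is a $k$-local $(\lambda,\mu')$-quasi-geodesic for suitable parameters: the hypothesis $\dist(1,g) \geq M$ for the ``pure'' factors $g$ guarantees each individual segment is long, while the Bounded Intersection estimate plus compatibility of parabolic subgroups (hypothesis (2)) guarantees that consecutive segments do not fold back on each other by more than a controlled amount, so no cancellation can shorten the path below the local quasi-geodesic threshold. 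The parabolic compatibility hypothesis enters precisely when a segment and the next dip deep into a common horoball: there, Lemma~\ref{lem:parabolic-approximation-2} replaces points by elements of $Q\cap P$ or $R\cap P$, and because one of $Q'\cap P$, $R'\cap P$ contains the other, the product of the two parabolic pieces is again a single pure factor of the expected type rather than an uncontrolled element — without compatibility one could not keep the word reduced.

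Having shown the concatenation is a genuine $(\lambda',\mu')$-quasi-geodesic, conclusion (1) is immediate: if $w \neq 1$ in $Q' \ast_{Q'\cap R'} R'$ then $n \geq 1$ after reduction, the quasi-geodesic has length bounded below by roughly $n(M/\lambda' - \mu')/\text{const} > 0$, so $w\,x_0 \neq x_0$ and $w \neq 1$ in $G$; since the natural map is always surjective onto $\langle Q'\cup R'\rangle$, it is an isomorphism. For conclusion (2): an arbitrary element of $\langle Q'\cup R'\rangle$ is represented by a reduced word $w$; take a geodesic $c$ in $X$ from $x_0$ to $w\,x_0$; by the Morse Lemma $c$ lies in a bounded neighbourhood of the quasi-geodesic built from $w$; on each segment $[\,g_1\cdots g_{i-1} x_0,\ g_1\cdots g_i x_0\,]$, relative quasiconvexity of $Q'$ (or $R'$) — more precisely of the conjugate $g_1\cdots g_{i-1} Q'$ — puts $c \cap (X-U)$ near $\langle Q'\cup R'\rangle$; assembling over all $i$ shows $c \cap (X-U) \subset \mc N_{\mu''}(\langle Q'\cup R'\rangle)$ for a uniform $\mu''$.

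The main obstacle is the bookkeeping around horoballs in Step~three. When a pure factor $g_i$ has both $1$ and $g_i$ close to the same horoball $B$, the geodesic $[x_0, g_i x_0]$ can plunge arbitrarily deep into $B$, so the naive ``long segments, small backtracking'' picture fails in the thick-part metric and one must argue in the cusped/electrified geometry or else peel off the deep excursions using Lemma~\ref{lem:parabolic-action} and Lemma~\ref{lem:parabolic-approximation-2}. Getting the quantifiers to line up — choosing $L$ (the neighbourhood of the thick part in Proposition~\ref{prop}) after the backtracking constants but before the final quasi-geodesic parameters, and verifying that the finite-index loss from hypothesis (1) only inflates $M_0$ by a bounded amount — is where the real care is required; the rest is an orchestrated application of the Morse Lemma and the local-to-global lemma.
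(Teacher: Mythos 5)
Your overall architecture --- concatenate geodesics over the syllables of a reduced word, show the resulting broken path is a local quasi-geodesic, apply the local-to-global lemma (Lemma~\ref{concat}) and the Morse Lemma (Lemma~\ref{morse}) to get injectivity, and then use relative quasiconvexity of the syllable groups along the fellow-travelling geodesic to get quasiconvexity of $\langle Q'\cup R'\rangle$ --- is exactly the paper's. But there is a genuine gap at the heart of your Step three, in the assertion that Bounded Intersection forces long backtracking to be absorbed into $Q'\cap R'$, ``which is exactly what a reduced word forbids.'' A reduced word does not forbid this. Reducedness only says that each syllable lies outside $Q'\cap R'$; it is perfectly consistent with $g_i=qb$ and $g_{i+1}=b^{-1}r$ for an arbitrarily long $b\in Q'\cap R'$, in which case the consecutive segments $[h_{i-1},h_i]$ and $[h_i,h_{i+1}]$ backtrack along each other for length roughly $\dist(1,b)$, which is unbounded. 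What Lemma~\ref{lem:bounded-intersection} actually delivers is that the center of the triangle with vertices $1$, $g_i$, $g_ig_{i+1}$ lies within uniform distance of $g_ib$ for some $b\in Q'\cap R'$ --- a statement about \emph{where} the backtracking happens, not a bound on \emph{how much} there is.

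The paper closes this gap with a normalization you omit: among all alternating decompositions $g=g_1\cdots g_n$ (which differ from one another by shuffling elements of $Q'\cap R'$ between adjacent syllables), it chooses one minimizing $\sum_i\dist(1,g_i)$. Once the backtracking point is located near $g_ib$ with $b\in Q'\cap R'$, rewriting $g=g_1\cdots(g_ib)(b^{-1}g_{i+1})\cdots g_n$ and invoking minimality (Lemma~\ref{lem:computation}) converts the triangle inequality through that point into the uniform Gromov-product bound $\dist(1,g_i)+\dist(1,g_{i+1})\leq\dist(1,g_ig_{i+1})+K$ of Lemma~\ref{lem:claim}, which is precisely what makes the concatenation an $M'$-local $(1,K)$-quasi-geodesic. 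The same minimality is used a second time in the horoball case, to bound $\dist(g_i,B)$ before compatibility and the Parabolic Approximation Lemma are brought in. Without some such choice of representative, your local quasi-geodesic constants cannot be made uniform and the argument does not go through as written; the rest of your outline (local-to-global, Morse, quasiconvexity of the pieces) matches the paper once this is repaired.
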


Consider $1\neq g\in Q' \ast_{Q'\cap R'} R'$ and write it as $g=g_1\dots g_n$ where the $g_i$'s are alternatively elements of $Q'\backslash Q'\cap R'$ and $R'\backslash Q'\cap R'$. Moreover, assume that this product is \emph{minimal} in the sense that $\sum \dist (1,g_i)$ is minimal among all such products describing $g$.
\begin{claim}(Lemma~\ref{lem:claim-2} below). There is a constant $K=K(Q,R,\delta)$ with the following property. For each $i$, let $h_i=g_1\dots g_i$.  Then the concatenation $\alpha = \alpha_1\cdots \alpha_{n-1}$ of geodesics $\alpha_i$ from $h_i$ to $h_{i+1}$ is an $M'-$local $(1,K)-$quasi-geodesic for $M'= \min\{ \dist (1, g_i) \}$.
\end{claim}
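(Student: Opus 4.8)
The plan is to prove the Claim by a local-to-global argument, showing that on each scale-$M'$ window along $\alpha$ the path fellow-travels a genuine geodesic, and then invoking Lemma~\ref{concat}. First I would fix the quasiconvexity constant $\mu$ for both $Q$ and $R$, enlarge it if necessary using Proposition~\ref{prop}, and set up the constant $M'=\min_i \dist(1,g_i)$; note each $\alpha_i$ has length $\dist(h_i,h_{i+1})=\dist(1,g_{i+1})\geq M'$ by left-invariance, so a window of length $M'$ along $\alpha$ meets at most two consecutive segments $\alpha_i,\alpha_{i+1}$, and it suffices to check that $\alpha_i\alpha_{i+1}$ is a uniform quasi-geodesic, or more precisely that any subpath of $\alpha_i\cup\alpha_{i+1}$ of length $\le M'$ is within bounded Hausdorff distance of the geodesic joining its endpoints.

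The heart of the matter is a lower bound on $\dist(h_{i-1},h_{i+1})$ in terms of $\dist(h_{i-1},h_i)+\dist(h_i,h_{i+1})$, i.e.\ that consecutive segments do not backtrack too much near their common vertex $h_i$. Suppose for contradiction that the geodesic triangle on $h_{i-1},h_i,h_{i+1}$ is very thin near $h_i$, so that $\alpha_{i-1}$ and $\alpha_i$ run close to each other for a long initial stretch near $h_i$. Translating by $h_i^{-1}$, this says $g_i^{-1}x_0$ and $g_{i+1}x_0$ lie close to a common geodesic through $x_0$; since $g_i$ and $g_{i+1}$ come from opposite factors (say $g_i\in Q'$, $g_{i+1}\in R'$, or vice versa), relative quasiconvexity of $Q$ and $R$ forces this long shared portion to stay in a $\mu$-neighbourhood of both $Q$ and $R$ once we are in $X-U$, while inside the horoballs one uses Lemma~\ref{lem:parabolic-approximation-2} to replace deep horoball excursions by elements of $Q\cap P$ and $R\cap P$. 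Combining with the Bounded Intersection Lemma~\ref{lem:bounded-intersection}, the shared portion lies in a bounded neighbourhood of $Q\cap R$, hence (after the horoball correction) there is an element of $Q'\cap R'$ realizing most of it — and here is where hypothesis~(3) enters: if $M$ is chosen larger than all these a priori bounds, any element of $Q'$ or $R'$ that close to $Q'\cap R'$ must already lie in $Q'\cap R'$, which lets us shorten the word $g_1\cdots g_n$ representing $g$ (absorb the shared syllable into the adjacent one), contradicting minimality of $\sum\dist(1,g_i)$. So the backtracking near each $h_i$ is bounded by a constant depending only on $Q,R,\delta$.

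With the no-backtracking estimate in hand, a standard thin-triangles computation gives that $\alpha_i\alpha_{i+1}$, and more generally any length-$\le M'$ subpath of $\alpha$, is a $(1,K)$-quasi-geodesic for $K=K(Q,R,\delta)$: geodesics have the $(1,0)$ property, the only defect comes from the bounded overlap at the single interior vertex a window can contain, and that contributes an additive error absorbed into $K$. This yields exactly that $\alpha$ is an $M'$-local $(1,K)$-quasi-geodesic, proving the Claim; I would then cite Lemma~\ref{concat} to upgrade it to a global quasi-geodesic, which is what the subsequent proof of Theorem~\ref{main} needs.

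The main obstacle I anticipate is the horoball bookkeeping in the no-backtracking step: the shared portion of $\alpha_{i-1}$ and $\alpha_i$ near $h_i$ may dip deep into a horoball, where neither relative quasiconvexity (which only controls $c\cap(X-U)$) nor the naive Bounded Intersection Lemma applies directly. The fix is to use Lemma~\ref{lem:parabolic-action} and Lemma~\ref{lem:parabolic-approximation-2} to project such excursions back to $Q\cap P$ and $R\cap P$, and then — crucially — to invoke the \emph{compatible parabolic subgroups} hypothesis~(2), so that $Q'\cap P$ and $R'\cap P$ are nested and one of them already lies in $Q'\cap R'$, making the "absorb the syllable" move legitimate even when the overlap is parabolic. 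Getting the quantifiers straight — $M$ depending on $Q,R,\dist$ only, while $K$ depends on $Q,R,\delta$ only, both uniform over all admissible $Q',R'$ — is the delicate part of the write-up.
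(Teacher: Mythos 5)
Your overall skeleton does match the paper's: you reduce the Claim to a uniform bound on the backtracking (Gromov product) of $\alpha_{i-1}$ and $\alpha_i$ at each vertex $h_i$, you obtain that bound from relative quasiconvexity of $Q$ and $R$, the Bounded Intersection Lemma~\ref{lem:bounded-intersection}, finite index of $Q'\cap R'$ in $Q\cap R$, and (in the horoball case) parabolic approximation together with compatibility of parabolic subgroups, and you then deduce the local $(1,K)$ property from the bound $\dist(x,h_i)+\dist(h_i,y)\leq \dist(x,y)+K$ on a window containing a single vertex. This is precisely the structure of Lemmas~\ref{lem:claim} and~\ref{lem:claim-2}.

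However, the step where you turn ``the shared portion lies near $Q'\cap R'$'' into a contradiction is a genuine gap. The overlap of $\alpha_{i-1}$ and $\alpha_i$ near $h_i$ may be far shorter than either adjacent syllable (each $\dist(1,g_i)\geq M'$ can be enormous compared with the overlap), so neither $g_i$ nor $g_{i+1}$ is shown to be close to $Q'\cap R'$; hypothesis~(3) therefore gives you nothing here, and ``absorbing the shared syllable into the adjacent one'' would lower the syllable count, which is impossible for a reduced word in an amalgamated product and is in any case not what the minimality assumption controls (it is minimality of $\sum_i\dist(1,g_i)$ over alternating expressions). There is also a quantifier problem: the Claim asserts $K=K(Q,R,\delta)$ precisely because $M$ is chosen \emph{afterwards} (large enough that $M'>k,\lambda'\mu'$ in Lemma~\ref{concat} with $\mu=K$), so the proof of the Claim must not invoke hypothesis~(3). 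The correct use of minimality is a \emph{splitting}, not an absorption, and this is the paper's key move (Lemma~\ref{lem:computation}): once you have $b\in Q'\cap R'$ with $g_ib$ within a bounded distance of a center $p$ of the triangle with vertices $1,g_i,g_ig_{i+1}$, rewrite $g=g_1\cdots(g_ib)(b^{-1}g_{i+1})\cdots g_n$, which is again alternating with the same number of syllables; minimality gives $\dist(1,g_i)+\dist(1,g_{i+1})\leq\dist(1,g_ib)+\dist(1,b^{-1}g_{i+1})$, and the triangle inequality through $p$ bounds the right-hand side by $\dist(1,g_ig_{i+1})$ plus a constant depending only on $Q,R,\delta$. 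This yields the Gromov product bound directly, with no contradiction and no use of hypothesis~(3). (In the horoball case one also needs, as in the paper, a preliminary minimality argument showing that the horoball containing the center lies boundedly close to $g_i$ before Lemma~\ref{lem:parabolic-approximation-2} and compatibility can be brought in.) With that replacement your argument closes and coincides with the paper's.
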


\subsection*{Conclusion of the proof using the claim.} If we require $M$ as in the statement of the theorem to be large enough, then we can assume $M'> k,\lambda'\mu'$ where $k,\lambda'$ and $\mu'$ are as in Lemma~\ref{concat} for $\lambda=1, \mu=K$.
It follows that $\alpha$ is a quasi-geodesic with distinct endpoints, and hence $g\neq 1$ in $G$. Therefore we have shown that the map $Q'\ast_{Q'\cap R'}R' \to G$ is injective.

It is left to prove that if $Q'$ and $R'$ are relatively quasiconvex, then $\langle Q', R'\rangle$ is relatively quasiconvex. By Lemma \ref{morse} (Morse Lemma), any $(\lambda', \mu')-$quasi-geodesic is at Hausdorff distance at most $L$ from any geodesic between its endpoints. In particular, if $\gamma$ is a geodesic from $1$ to $g$, then $\gamma \cap (X-U) \subseteq \mc N_{L}(\alpha )\cap (X-U)$. It is enough to show that $\alpha \cap \mc N_{L}(X-U)$ is contained in $\mc N_{\mu} (\langle Q'\cup R'\rangle )$.  Let $p\in\alpha \cap \mc N_{L} (X-U)$ and let $i$ be so that $p\in [h_i,h_{i+1}]\cap (X-U)$. Assume $g_{i+1}\in Q'$, the other case being symmetric. As $Q'$ is relatively quasiconvex and in view of Proposition~\ref{prop}, there is a constant $\mu$ so that $p\in \mc N_\mu(h_iQ')\subseteq \mc N_\mu( \langle Q'\cup R'\rangle )$ (as $h_i\in \langle Q'\cup R'\rangle$). 

\subsection*{Proof of the Claim.} The proof is a sequence of three lemmas. 

\begin{lemma}\label{lem:computation}
Suppose $a\in Q'\cap R'$, $p$ is a point at distance at most $\delta$ from the geodesic segment $[1, g_ig_{i+1}]$ and $\dist (p,g_ia)\leq M$. Then 
\[\dist (1,g_i)+\dist (1,g_{i+1})   \leq  \dist (1, g_ig_{i+1})+2M+2\delta.\] 
\end{lemma}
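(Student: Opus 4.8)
The plan is to exploit the \emph{minimality} of the product $g_1\cdots g_n$ by comparing it with the alternating product obtained from the substitution $g_ig_{i+1}=(g_ia)(a^{-1}g_{i+1})$, and then to convert the resulting group-theoretic inequality into a geometric one using left invariance of $\dist$ and the hypothesis that $g_ia$ lies near $[1,g_ig_{i+1}]$.

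First I would check that this substitution produces a legitimate competing word. Assume $g_i\in Q'$ and $g_{i+1}\in R'$ (the other case is identical). Since $a\in Q'\cap R'$ we have $g_ia\in Q'$ and $a^{-1}g_{i+1}\in R'$; moreover neither lies in $Q'\cap R'$ — if $g_ia\in Q'\cap R'$ then $g_i=(g_ia)a^{-1}\in Q'\cap R'$, contrary to hypothesis, and symmetrically for $a^{-1}g_{i+1}$. In particular both are nontrivial and lie in the required factors, so
\[ g_1\cdots g_{i-1}\,(g_ia)\,(a^{-1}g_{i+1})\,g_{i+2}\cdots g_n \]
is again an alternating product representing $g$ with $n$ terms. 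By minimality of $\sum_j\dist(1,g_j)$, cancelling the common terms yields
\[ \dist(1,g_i)+\dist(1,g_{i+1})\leq \dist(1,g_ia)+\dist(1,a^{-1}g_{i+1}). \]

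Next I would rewrite the right-hand side using left invariance of $\dist$. Put $x=g_ia$. Then $\dist(1,g_ia)=\dist(1,x)$, and, multiplying on the left by $g_ia$, one gets $\dist(1,a^{-1}g_{i+1})=\dist(g_ia,g_ig_{i+1})=\dist(x,g_ig_{i+1})$. Hence the previous inequality becomes $\dist(1,g_i)+\dist(1,g_{i+1})\leq \dist(1,x)+\dist(x,g_ig_{i+1})$. Finally I would feed in the hypothesis: by assumption $\dist(p,x)\leq M$ and $p$ is within $\delta$ of some point $q\in[1,g_ig_{i+1}]$, so $\dist(x,q)\leq M+\delta$; since $q$ lies on a geodesic from $1$ to $g_ig_{i+1}$ we have $\dist(1,q)+\dist(q,g_ig_{i+1})=\dist(1,g_ig_{i+1})$, and two applications of the triangle inequality give
\[ \dist(1,x)+\dist(x,g_ig_{i+1})\leq \dist(1,q)+\dist(q,g_ig_{i+1})+2(M+\delta)=\dist(1,g_ig_{i+1})+2M+2\delta. \]
Chaining the three displayed inequalities proves the lemma.

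There is no serious obstacle here: the argument is essentially bookkeeping with the triangle inequality and left invariance. The one point deserving care — and the reason the coset computation above is spelled out — is verifying that the substituted word is still a genuine \emph{alternating} word with $n$ terms, so that minimality of the original product actually applies to it.
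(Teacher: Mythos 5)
Your proof is correct and follows essentially the same route as the paper's: use minimality of the alternating product under the substitution $g_ig_{i+1}=(g_ia)(a^{-1}g_{i+1})$ to reduce to bounding $\dist(1,g_ia)+\dist(1,a^{-1}g_{i+1})$, then apply left invariance and the triangle inequality through a point $p'$ on $[1,g_ig_{i+1}]$ within $\delta$ of $p$. Your extra verification that the substituted word is still a genuine alternating word (so that minimality applies) is a detail the paper leaves implicit, and it is a worthwhile addition.
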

\begin{proof}
Let $p' \in [1, g_ig_{i+1}]$ be such that $\dist (p, p')<\delta$. Then
\begin{equation} \begin{split} 
\dist (1,g_ia)+& \dist (1,a^{-1}g_{i+1}) \leq \\
& \leq \dist (1, p') + \dist (p', g_ia) + \dist (g_i a, p') + \dist (p', g_ig_{i+1}) \\ 
& \leq \dist (1, g_ig_{i+1}) + 2M+2\delta \nonumber \end{split}  \end{equation}
As $g$ can be written as $g_1\dots (g_ia)(a^{-1}g_{i+1})\dots g_n$, the minimality assumption implies
$\dist (1,g_i)+\dist (1,g_{i+1})   \leq  \dist (1, g_ig_{i+1})+2M+2\delta. 
$
\end{proof}

\begin{lemma}(Gromov's Inner Product is Bounded)\label{lem:claim} 
There exists a constant $K=K(Q, R)$, not depending on $g$, such  that  \[\dist (1,g_i)+\dist (1,g_{i+1})  \leq \dist (1,g_{i}g_{i+1}) + K .\]
\end{lemma}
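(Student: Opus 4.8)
The plan is to prove Lemma~\ref{lem:claim} by combining Lemma~\ref{lem:computation} with the parabolic approximation machinery, distinguishing two cases according to whether the relevant geodesic $[1, g_ig_{i+1}]$ stays in the thick part or enters deeply into a horoball. First I would recall why this is the crux: the Gromov inner product $(g_i^{-1} \mid g_{i+1})_1$ — up to the factor of $2$, the quantity $\dist(1,g_i) + \dist(1,g_{i+1}) - \dist(1,g_ig_{i+1})$ — measures how much the geodesics $[1,g_i]$ and $[g_ig_{i+1}, g_i]$ fellow-travel near their common endpoint $g_i$. In a $\delta$-hyperbolic space this is controlled by finding a point $p$ on $[1, g_ig_{i+1}]$ that is $\delta$-close to a point of the concatenation near $g_i$; the issue is to bound $\dist(p, g_i)$ independently of $g$.

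Here is where the case distinction enters. Pick $p \in [1, g_ig_{i+1}]$ within $\delta$ of the geodesic $[1,g_i] \cup [g_i, g_ig_{i+1}]$ near the "turning point" $g_i$, so that $\dist(1,g_i) + \dist(1,g_{i+1}) \leq \dist(1,g_ig_{i+1}) + 2\dist(p, g_i) + C\delta$ for a universal constant $C$ (a standard thin-triangle estimate). In the thick case, where $p$ lies in $\mc N_L(X - U)$ for a suitable $L$ depending only on $\delta$: since $g_{i+1} \in Q'$ or $R'$ and $h_i = g_1\cdots g_i$ translates, relative quasiconvexity of $Q$ (or $R$) together with Proposition~\ref{prop} forces $p$ to be within $\mu = \mu(Q,R,\delta)$ of $g_i Q$ (resp. $g_i R$); but also $p$ is close to $[1, g_i]$ hence within $\mu$ of $Q$ (resp. $R$) after translating by $g_i^{-1}$ — combining, $g_i^{-1}p$ is near both $Q$ and $R$, so by Bounded Intersection (Lemma~\ref{lem:bounded-intersection}) it is near $Q \cap R$, and since $g_i \notin Q' \cap R' \subseteq Q \cap R$... wait, more carefully: one shows $\dist(g_i, Q\cap R)$ is bounded, which is exactly the hypothesis we need to feed into Lemma~\ref{lem:computation}, taking $a \in Q' \cap R'$ with $g_i a$ close to $p$ — but here we need $a \in Q'\cap R'$, which requires knowing $Q'\cap R'$ is "large" in $Q\cap R$; this is where hypothesis (1) of the theorem, that $Q'\cap R'$ has finite index in $Q\cap R$, is used to pass from $Q\cap R$ to $Q'\cap R'$ at the cost of enlarging $M$ by the diameter of a coset transversal.

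In the deep case, where $p$ lies far inside some horoball $B$: then a large sub-segment of $[1, g_ig_{i+1}]$ around $p$ is deep in $B$, and by relative quasiconvexity both $g_i$ and the entry/exit points of $[1,g_i]$ and $[g_i, g_ig_{i+1}]$ relative to a thickened $B$ are controlled; one applies Lemma~\ref{lem:parabolic-approximation-2} (Parabolic Approximation) to $Q$ and to $R$ to replace $g_i$ by an element $p_Q \in Q \cap P$ and $p_R \in R \cap P$ within bounded distance, where $P = \mathrm{Stab}(B)$. The compatible-parabolic hypothesis (2) says $Q' \cap P$ and $R' \cap P$ are nested, so one of them contains the other, and one produces the required $a \in Q' \cap R'$ from the smaller parabolic subgroup — again invoking Lemma~\ref{lem:parabolic-action} to control coset representatives on the horosphere. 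In either case we arrive at the hypotheses of Lemma~\ref{lem:computation}, which yields the bound with $K = 2M + 2\delta$ for the appropriate $M$.

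I expect the main obstacle to be the bookkeeping in the deep case: ensuring that the element $a$ extracted from the parabolic subgroup actually lies in $Q' \cap R'$ rather than merely in $Q \cap R$ or in $P$, which forces a careful interplay between hypotheses (1) and (2) — finite index on the one hand, nesting of parabolics on the other — and keeping the resulting constant $K$ independent of $g$ (it may depend on $Q, R, \delta$, and on the finitely many horoballs meeting a bounded neighborhood of $1$, which is why properness and the finite collection $\Parabolics$ matter). A secondary subtlety is legitimately reducing to the two clean cases: a priori $p$ could be at intermediate depth, so one should choose the threshold $L$ after fixing $\delta$ and the quasiconvexity constants, and handle the "medium" regime by absorbing it into the thick case via Proposition~\ref{prop}.
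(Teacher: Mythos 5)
Your thick case is essentially the paper's argument (a point near the turn is close to both $Q$ and $g_iR$ by quasiconvexity and Proposition~\ref{prop}, Bounded Intersection plus the finite-index hypothesis give $b\in Q'\cap R'$ with $g_ib$ near that point, and Lemma~\ref{lem:computation} concludes); the slip ``$\dist(g_i,Q\cap R)$ is bounded'' should be ``$\dist\bigl(p, g_i(Q\cap R)\bigr)$ is bounded,'' which you correct implicitly. The deep case, however, has a genuine gap. You assert that ``by relative quasiconvexity both $g_i$ and the entry/exit points \dots are controlled,'' but relative quasiconvexity does not bound $\dist(g_i,B)$: the geodesic $[1,g_i]$ can cut deeply through a horoball far from both endpoints. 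Bounding $\dist(g_i,B)$ is the first half of the paper's deep case and is exactly where minimality of the decomposition is used: near the exit point $p_1$ of $[1,g_i]$ on the $g_i$-side both $Q$ and $g_iR$ pass close, Bounded Intersection and hypothesis (1) give $b\in Q'\cap R'$ with $g_ib$ near $p_1$, and rewriting $g=\cdots(g_ib)(b^{-1}g_{i+1})\cdots$ together with minimality forces $\dist(g_i,B)\leq M_8$. Without this bound you cannot even invoke Lemma~\ref{lem:parabolic-approximation-2} at $g_i$, since after translating by $g_i^{-1}$ its hypothesis requires $1$ to lie in a $\mu$-neighborhood of the horoball for a $\mu$ fixed in advance (and its constant depends on $\mu$, via the finitely many horoballs near the basepoint). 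This step is absent from your sketch and cannot be replaced by quasiconvexity alone.

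The second problem is the mechanism by which you produce $a\in Q'\cap R'$. Applying Parabolic Approximation separately to $Q$ and to $R$ to get $p_Q\in Q\cap P$ and $p_R\in R\cap P$ near $g_i$ does not yield a common element of $Q'\cap R'$, and elements near $g_i$ are not what Lemma~\ref{lem:computation} needs anyway: you must exhibit $a\in Q'\cap R'$ with $g_ia$ within bounded distance of a point that is $O(\delta)$-close to $[1,g_ig_{i+1}]$. Your candidate point $p\in[1,g_ig_{i+1}]$ is deep in $B$, whereas $g_ia$ with $a$ parabolic stays within bounded distance of the horosphere region, so $\dist(p,g_ia)$ is unbounded and the hypothesis of Lemma~\ref{lem:computation} fails. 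The paper's mechanism is different: take $q_1\in Q$ near the entrance point of $[1,g_i]$ into $B$ on the $1$-side (this point lies between $1$ and the center, hence is $O(\delta)$-close to $[1,g_ig_{i+1}]$), apply the approximation lemma once to $\{1,g_i^{-1}q_1\}\subset Q\cap\mc N_{M_9}(g_i^{-1}B)$ --- legitimate only after $\dist(g_i,B)\leq M_8$ --- to get a single $a\in Q'\cap g_i^{-1}Pg_i$ with $g_ia$ near $q_1$; compatibility (nesting of $Q'\cap g_i^{-1}Pg_i$ and $R'\cap g_i^{-1}Pg_i$) then places this same $a$ in $R'$, so $a\in Q'\cap R'$ and Lemma~\ref{lem:computation} applies. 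In short, both the minimality-based bound on $\dist(g_i,B)$ and the ``travel along the horosphere to the entrance point'' use of the parabolic approximation are missing, and these constitute the core of the deep case.
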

\begin{proof}
Constants which depend only on $Q$, $R$ and $\delta$ are  denoted by $M_i$, the index counts positive increments of the constant during the proof. The constant $K$ of the statement corresponds to $M_{11}$.

Suppose $g_i\in Q'$, the other case being symmetric, and consider a triangle $\Delta$ with vertices $1,g_i,g_ig_{i+1}$ and let $p\in[1, g_i]$ be a center of $\Delta$, i.e., the $\delta$-neighborhood of $p$ intersects all sides of $\Delta$.

Suppose that $p\in X-U$. Then $\dist (p,Q), \dist (p,g_i R)\leq M_1$ by relative quasiconvexity of $Q$ and $R$. By Lemma~\ref{lem:bounded-intersection}, there exists $a\in Q\cap R$ so that $\dist (p,g_ia)\leq M_2$. Since $Q'\cap R'$ is a finite index subgroup of $Q\cap R$, there is $b \in Q'\cap R'$ such that 
$\dist (p,g_ib)\leq M_3$. By Lemma~\ref{lem:computation},
$\dist (1,g_i)+\dist (1,g_{i+1})   \leq  \dist (1, g_ig_{i+1})+2M_3+2\delta.$
\begin{figure}[h]
 \includegraphics[scale=0.8]{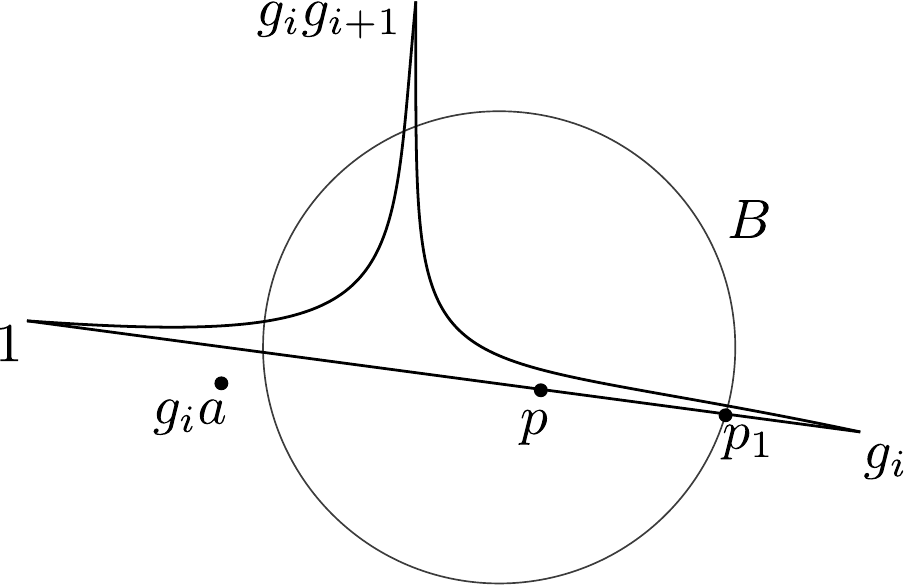}
\end{figure}

Suppose instead that $p$ is in a horoball $B$, whose stabilizer is $P$. We can assume $\dist (g_i,B)\leq M_8$. Indeed, let $p_1$ be the entrance point of the geodesic $[g_i, 1]$ in $B$; then $\dist (p_1, Q)< M_4$ by quasiconvexity of $Q$. Notice that $\dist (p_1, [g_i, g_ig_{i+1}])$ is at most $\delta$ since $p$ is a center of $\Delta$ and $p_1 \in [g_i, p]$. Notice that $\dist(p_1,[g_i,g_ig_{i+1}])$ is at most $2\delta$ (consider a triangle with vertices $p,g_i,p'$ for $p'\in [g_i,g_ig_{i+1}]$ so that $d(p,p')\leq \delta$). By quasiconvexity of $R$, there is $p_2 \in [g_i, g_ig_{i+1}]$ such that $\dist(p_1, p_2), \dist (p_2, g_iR)< M_5$.  Lemma~\ref{lem:bounded-intersection} implies  there is $a\in Q\cap R$ such that $\dist (g_ia, p_1), \dist (g_ia, p_2)<M_6$. Since $Q'\cap R'$ is a finite index subgroup of $Q\cap R$, there is $b\in Q'\cap R'$ such that $\dist (g_ib, p_1), \dist (g_ib, p_2)<M_7$.  Since $g$ can be written as $g_1\dots (g_ib)(b^{-1}g_{i+1})\dots g_n$; by minimality
\begin{equation}
\begin{split}
\dist (1, p_1) +  & \dist (p_1, g_i)  +  \dist ( g_i , p_2) + \dist (p_2, g_i g_{i+1}) = \\
& = \dist (1, g_i)+\dist(1, g_ig_{i+1}) \\
& \leq \dist (1, g_ib) +\dist (1, b^{-1}g_{i+1}) \\
&= \dist(1, p_1)+\dist(p_1, g_ib)+ \dist(g_ib, p_2) + \dist (p_2, g_ig_{i+1}),\nonumber
\end{split}
\end{equation}
and therefore 
\begin{equation}
\begin{split}
2\dist (g_i,B)  & = 2\dist (p_1, g_i) \\
 & \leq \dist (p_1, g_i) + \dist (g_i, p_2) +\dist (p_1, p_2) \\
& \leq \dist(p_1, g_ib)+ \dist(g_ib, p_2) +\dist (p_1, p_2)   \\
& \leq 2M_8.  \nonumber
\end{split}
\end{equation}

Since $Q'$ and $R'$ have compatible parabolic subgroups, assume that $Q'\cap g_i^{-1}Pg_i \leq R'\cap g_i^{-1}Pg_i $; the other case being symmetric. By quasiconvexity of $Q$, there is $q_1\in Q$ at distance $M_9$ from the entrance point of $[1, g_i]$ to $B$. 
By the parabolic approximation lemma applied to $\{1, g_i^{-1}q_1\} \subset Q'\cap \mc N_{M_9}(g_i^{-1}B)$, there is an element $a \in Q'\cap g_i^{-1}Pg_i$ such that $\dist (g_ia,q_1)\leq M_{10}$.  Observe that $a\in Q'\cap R'$. By Lemma~\ref{lem:computation}, \[\dist (1,g_i)+\dist (1,g_{i+1})   \leq  \dist (1, g_ig_{i+1})+M_{11}.  \qedhere\] 
\end{proof}

\begin{lemma}\label{lem:claim-2}
For each $i$, let $h_i=g_1\dots g_i$.  Then the concatenation $\alpha = \alpha_1\cdots \alpha_{n-1}$ of geodesics $\alpha_i$ from $h_i$ to $h_{i+1}$ is an $M'-$local $(1,K)-$quasi-geodesic for $M'= \min\{ \dist (1, g_i) \}$. 
\end{lemma}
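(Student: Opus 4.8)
The upper inequality in the quasi-geodesic condition is free: each $\alpha_i$ is a geodesic, so $\alpha$ is parametrized by arc length and $\dist(\alpha(s),\alpha(t))\le |s-t|$ for all $s,t$. So, after possibly enlarging the constant $K$ of Lemma~\ref{lem:claim}, it suffices to prove the lower bound $\dist(\alpha(s),\alpha(t))\ge |s-t|-K$ whenever $|s-t|\le M'$.

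First I would record that, by left-invariance of $\dist$, the segment $\alpha_i$ from $h_i=g_1\cdots g_i$ to $h_{i+1}=h_ig_{i+1}$ has length $\dist(1,g_{i+1})\ge M'$, since $M'=\min_i\dist(1,g_i)$. Consequently, any two points $\alpha(s),\alpha(t)$ with $|s-t|\le M'$ either lie on a single segment $\alpha_i$ — in which case $\dist(\alpha(s),\alpha(t))=|s-t|$ and there is nothing to prove — or lie on two consecutive segments $\alpha_{j-1}$ and $\alpha_j$ meeting at the corner $h_j$ (a sub-path of length $\le M'$ cannot cross a whole interior segment, as that segment alone has length $\ge M'$; the boundary case reduces to the single-segment one). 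Thus the whole statement reduces to one estimate at each interior corner $h_j$, $2\le j\le n-1$: for $x\in[h_{j-1},h_j]$ and $y\in[h_j,h_{j+1}]$ one has
\[ \dist(x,y)\ \ge\ \dist(x,h_j)+\dist(h_j,y)-K, \]
equivalently the Gromov product $(x\cdot y)_{h_j}:=\tfrac12\big(\dist(x,h_j)+\dist(y,h_j)-\dist(x,y)\big)$ is at most $K/2$.

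The key step is to translate Lemma~\ref{lem:claim} into exactly this corner bound at the vertices. Expanding the Gromov product and using left-invariance ($h_j=h_{j-1}g_j$, $h_{j+1}=h_{j-1}g_jg_{j+1}$) gives $(h_{j-1}\cdot h_{j+1})_{h_j}=\tfrac12\big(\dist(1,g_j)+\dist(1,g_{j+1})-\dist(1,g_jg_{j+1})\big)\le K/2$ by Lemma~\ref{lem:claim}. It remains to upgrade this from the endpoints $h_{j-1},h_{j+1}$ to arbitrary $x\in[h_{j-1},h_j]$, $y\in[h_j,h_{j+1}]$. For this I would apply the standard $\delta$-hyperbolic inequality $(a\cdot b)_w\ge\min\{(a\cdot c)_w,(c\cdot b)_w\}-\delta$ twice, using that $(x\cdot h_{j-1})_{h_j}=\dist(x,h_j)$ and $(y\cdot h_{j+1})_{h_j}=\dist(y,h_j)$ because $x,y$ lie on the relevant geodesics through $h_j$, together with the universal bound $(x\cdot y)_{h_j}\le\min\{\dist(x,h_j),\dist(y,h_j)\}$. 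This yields $(x\cdot y)_{h_j}\le(h_{j-1}\cdot h_{j+1})_{h_j}+2\delta\le K/2+2\delta$, and replacing $K$ by $K+4\delta$ (still a function of $Q,R,\delta$ only) completes the argument.

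I do not expect a serious obstacle: the real content is already packaged in Lemma~\ref{lem:claim}, and this lemma is the routine assembly of the corner estimates into a local quasi-geodesic statement. The only points requiring care are that the constant must be independent of $g$ and of $n$ — which is automatic, as the constant in Lemma~\ref{lem:claim} is — and the elementary bookkeeping showing that a sub-path of arc length at most $M'$ meets at most one corner, which is precisely where the choice $M'=\min_i\dist(1,g_i)$ is used.
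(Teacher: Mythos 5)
Your proposal is correct, and its skeleton (reduce to an estimate at a single corner $h_j$ for points on the two adjacent segments, then feed in Lemma~\ref{lem:claim}) is the same as the paper's; the difference is only in how you pass from the endpoint estimate to arbitrary interior points $x\in[h_{j-1},h_j]$, $y\in[h_j,h_{j+1}]$. You do this via Gromov products, applying the $\delta$-hyperbolic inequality $(a\cdot b)_w\ge\min\{(a\cdot c)_w,(c\cdot b)_w\}-\delta$ twice and paying an extra $4\delta$ in the constant. The paper's proof needs no hyperbolicity at all at this step: since $x$ lies on the geodesic $[h_{j-1},h_j]$ and $y$ on $[h_j,h_{j+1}]$, one has $\dist(h_{j-1},h_j)=\dist(h_{j-1},x)+\dist(x,h_j)$ and likewise at the other corner, so sandwiching $\dist(h_{j-1},h_{j+1})$ between the triangle inequality $\dist(h_{j-1},h_{j+1})\le\dist(h_{j-1},x)+\dist(x,y)+\dist(y,h_{j+1})$ and the lower bound $\dist(h_{j-1},h_{j+1})\ge\dist(h_{j-1},h_j)+\dist(h_j,h_{j+1})-K$ from Lemma~\ref{lem:claim} gives $\dist(x,y)\ge\dist(x,h_j)+\dist(h_j,y)-K$ directly, with the same constant $K$ and valid in any geodesic space. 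So your argument works, but the thin-triangle machinery (and the replacement of $K$ by $K+4\delta$) is avoidable; your explicit bookkeeping that a subpath of length at most $M'$ meets at most one corner is a point the paper leaves implicit, and it is handled correctly.
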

\begin{proof}
This holds in view of Lemma~\ref{lem:claim} and the following computation for $x\in [h_{i-1},h_{i}]$ and $y\in [h_{i},h_{i+1}]$:
\begin{equation} 
\begin{split}
\dist (h_{i-1},x) & +\dist (x,y)  +\dist (y,h_{i+1})\geq  \dist (h_{i-1},h_{i+1}) \geq \\
& \geq \dist (h_{i-1},h_i)+\dist (h_i,h_{i+1})-K=\\
& = \dist (h_{i-1},x)+\dist (x,h_i)+\dist (h_i,y)+\dist (y,h_{i+1})-K  \nonumber \end{split}
\end{equation}
that yields $\dist (x,y)+K \geq \dist (x,h_i)+\dist (h_i,y)$.
\end{proof}

\bibliographystyle{plain}
\bibliography{Xbib.bib}

\end{document}